\begin{document}
\newcommand{\note}[1]{\marginpar{\tiny #1}}
\newtheorem{theorem}{Theorem}[section]
\newtheorem{result}[theorem]{Result}
\newtheorem{fact}[theorem]{Fact}
\newtheorem{conjecture}[theorem]{Conjecture}
\newtheorem{lemma}[theorem]{Lemma}
\newtheorem{proposition}[theorem]{Proposition}
\newtheorem{corollary}[theorem]{Corollary}
\newtheorem{facts}[theorem]{Facts}
\newtheorem{question}[theorem]{Question}
\newtheorem{props}[theorem]{Properties}
\theoremstyle{definition}
\newtheorem{example}[theorem]{Example}
\newtheorem{definition}[theorem]{Definition}
\newtheorem{remark}[theorem]{Remark}

\newcommand{\notes} {\noindent \textbf{Notes.  }}
\renewcommand{\note} {\noindent \textbf{Note.  }}
\newcommand{\defn} {\noindent \textbf{Definition.  }}
\newcommand{\defns} {\noindent \textbf{Definitions.  }}
\newcommand{\x}{{\bf x}}
\newcommand{\z}{{\bf z}}
\newcommand{\B}{{\bf b}}
\newcommand{\V}{{\bf v}}
\newcommand{\T}{\mathcal{T}}
\newcommand{\Z}{\mathbb{Z}}
\newcommand{\Hp}{\mathbb{H}}
\newcommand{\D}{\mathbb{D}}
\newcommand{\R}{\mathbb{R}}
\newcommand{\N}{\mathbb{N}}
\renewcommand{\B}{\mathbb{B}}
\newcommand{\C}{\mathbb{C}}
\newcommand{\dt}{{\mathrm{det }\;}}
 \newcommand{\adj}{{\mathrm{adj}\;}}
 \newcommand{\0}{{\bf O}}
 \newcommand{\w}{\omega}
 \newcommand{\av}{\arrowvert}
 \newcommand{\zbar}{\overline{z}}
 \newcommand{\htt}{\widetilde{h}}
\newcommand{\ty}{\mathcal{T}}
\renewcommand\Re{\operatorname{Re}}
\renewcommand\Im{\operatorname{Im}}
\newcommand{\diam}{\operatorname{diam}}
\newcommand{\dist}{\text{dist}}
\newcommand{\ds}{\displaystyle}
\numberwithin{equation}{section}
\newcommand{\cN}{\mathcal{N}}
\renewcommand{\theenumi}{(\roman{enumi})}
\renewcommand{\labelenumi}{\theenumi}
\newcommand{\inte}{\operatorname{int}}

\date{\today}
\title{On Julia limiting directions in higher dimensions}
\author{A. N. Fletcher}

\maketitle

\begin{abstract}
In this paper we study, for the first time, Julia limiting directions of quasiregular mappings in $\R^n$ of transcendental-type. First, we give conditions under which every direction is a Julia limiting direction. Along the way, our methods show that if a quasi-Fatou component contains a sectorial domain, then there is a polynomial bound on the growth in the sector. Second, we give a contribution to the inverse problem in $\R^3$ of determining which compact subsets of $S^2$ can give rise to Julia limiting directions. The methods here will require showing that certain sectorial domains in $\R^3$ are ambient quasiballs, which is a contribution to the notoriously hard problem of determining which domains are the image of the unit ball $\B^3$ under an ambient quasiconformal map of $\R^3$ to itself.
\end{abstract}

\section{Introduction}

Let $f:\C \to \C$ be a transcendental entire function. We say that $e^{i\theta} $ in the unit circle $S^1$ is a {\it Julia limiting direction} of $f$ if there exists a sequence $(z_n)_{n=1}^{\infty}$ contained in the Julia set $J(f)$ with $|z_n| \to \infty$ and 
\[ e^{i \arg z_n} \to e^{i\theta} \]
in $S^1$ as $n\to \infty$. We write $L(f)$ for the set of Julia limiting directions of $f$. This definition was introduced by Qiao in \cite{Qiao}, where it was shown that if the lower order of $f$ is a finite value $\lambda$, then $L(f)$ contains an interval in $S^1$ of Lebesgue measure at least $\min \{ 2\pi , \pi / \lambda \}$. In particular, if $\lambda \leq 1/2$, then every element of $S^1$ is a Julia limiting direction. Subsequently, $L(f)$ has been studied for transcendental meromorphic functions in \cite{QW,WY,Wang,ZWH}, as well as for solutions of difference and differential equations, see for example \cite{WC} and the references contained therein. We also observe that some authors call the set of Julia limiting directions the radial distribution of $J(f)$.

The inverse problem for Julia limiting directions is to determine which compact sets in $S^1$ can be realized as $L(f)$. For meromorphic functions, this question was settled by Wang and Yao \cite{WY}. For entire functions, this question is only interesting when the lower order is larger than $1/2$. There exists an entire function of infinite order with only one Julia limiting direction, see the paper \cite{Baker} of Baker. In general, the inverse problem was solved completely for functions of infinite lower order (and partially for finite lower order) in \cite{WY}.

In the present paper we study Julia limiting directions for quasiregular mappings $f:\R^n \to \R^n$ of transcendental-type, where $n\geq 2$. Quasiregular mappings provide the natural setting for the generalization of complex dynamics into higher (real) dimensions and there is a well developed theory of quasiregular dynamics. In particular, the Julia set of a quasiregular mapping of transcendental-type was defined by Bergweiler and Nicks in \cite{BN}. We may therefore make the following definition.

\begin{definition}
\label{def:1}
Let $n\geq 2$ and let $f:\R^n \to \R^n$ be a quasiregular mapping of transendental-type. 
\begin{enumerate}[(i)]
\item We say that $\xi \in S^{n-1}$ is a {\it Julia limiting direction} of $f$ if there exists a sequence $(x_n)_{n=1}^{\infty}$ contained in $J(f)$ with $|x_n| \to \infty$ and $\sigma(x_n) \to \xi$. Here, $\sigma(x) \in S^{n-1}$ is defined by $\sigma(x) = x/|x|$.
\item We denote by $L(f) \subset S^{n-1}$ the set of Julia limiting directions of $f$.
\end{enumerate}
\end{definition}

We point out here that for a quasiregular mapping $f:\R^n \to \R^n$ of polynomial-type, which can be characterized as a map of finite degree, whose inner dilatation is strictly smaller than the degree, then by \cite{Berg} and \cite{FN} the Julia set is a compact subset of $\R^n$ and hence there are no Julia limiting directions.

\section{Statement of results}

{\bf Notation:} Throughout, $\R^n$ denotes Euclidean $n$-space and $\R^n \cup \{ \infty \}$ denotes the one point compactification of $\R^n$. We may sometimes identify $\R^n \cup \{ \infty \}$ with $S^n$ in the usual way. If the context is clear, $S(x,r)$ denotes the sphere in $\R^n$ centred at $x$ of radius $r>0$. Then $S^{n-1}$ will denote the unit $(n-1)$-sphere $S(0,1)$ in $\R^n$. If $U$ is a proper sub-domain of $\R^n$, then $d(x,\partial U)$ denotes the Euclidean distance from $x$ to the boundary of $U$.

We collect some basic results about $L(f)$ in the following.

\begin{proposition}
\label{prop:1}
Let $n\geq 2$ and $f:\R^n\to \R^n$ be quasiregular of transcendental-type.
Then the set $L(f)$ is a closed, non-empty subset of $S^{n-1}$.
\end{proposition}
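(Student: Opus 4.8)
The plan is to realise $L(f)$ as a nested intersection of compact ``tail'' sets on the sphere, after which both closedness and non-emptiness follow from elementary compactness arguments; the only genuinely dynamical ingredient needed is that the Julia set $J(f)$ is unbounded. Concretely, for $R>0$ put
\[
E_R \;=\; \overline{\{\, \sigma(x) : x \in J(f),\ |x| \geq R \,\}} \;\subseteq\; S^{n-1},
\]
the closure being taken in $S^{n-1}$. Each $E_R$ is a closed subset of the compact set $S^{n-1}$, hence compact, and the family $(E_R)_{R>0}$ is nested decreasing, i.e.\ $E_{R'} \subseteq E_R$ whenever $R' \geq R$.

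The first step is to verify the identity $L(f) = \bigcap_{R>0} E_R$. If $\xi \in L(f)$, choose $x_m \in J(f)$ with $|x_m| \to \infty$ and $\sigma(x_m) \to \xi$; for any fixed $R$ we have $|x_m| \geq R$ for all large $m$, so $\xi$ is a limit point of $\{\sigma(x) : x \in J(f),\ |x| \geq R\}$ and therefore $\xi \in E_R$. Conversely, if $\xi \in \bigcap_{R>0} E_R$, then $\xi \in E_m$ for every $m \in \N$, so we may pick $x_m \in J(f)$ with $|x_m| \geq m$ and $|\sigma(x_m) - \xi| < 1/m$; then $|x_m| \to \infty$ and $\sigma(x_m) \to \xi$, whence $\xi \in L(f)$. (Alternatively, the diagonal argument in this second direction proves closedness directly: a limit $\xi$ of points $\xi_k \in L(f)$ can be approximated by $x_k \in J(f)$ with $|x_k| \geq k$ and $|\sigma(x_k)-\xi_k| < 1/k$.)

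Granting the identity, closedness of $L(f)$ is immediate, as an intersection of closed sets is closed. For non-emptiness, one uses that $J(f)$ is unbounded: this is part of the foundational theory of the Julia set of a transcendental-type quasiregular mapping from \cite{BN} (for instance, the fast escaping set is non-empty with every component unbounded, and its boundary lies in $J(f)$). Hence for each $R>0$ there is some $x \in J(f)$ with $|x| \geq R$, so $E_R \neq \emptyset$. A nested family of non-empty compact sets has non-empty intersection, so $L(f) = \bigcap_{R>0} E_R \neq \emptyset$.

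Since the whole argument is point-set topology on $S^{n-1}$ once unboundedness of $J(f)$ is known, the only substantive point — and the one place where quasiregular dynamics rather than topology enters — is invoking that $J(f)$ is an unbounded subset of $\R^n$; this should be cited carefully from \cite{BN}, and there is no further obstacle.
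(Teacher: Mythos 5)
Your topological skeleton is correct and is essentially the paper's argument made explicit: the paper simply asserts that closedness is clear and that non-emptiness follows from the unboundedness of $J(f)$, while you package both deductions into the identity $L(f)=\bigcap_{R>0}E_R$ and Cantor's intersection theorem. That part needs no changes, and you correctly isolate the unboundedness of $J(f)$ as the only substantive input.

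The gap is in your parenthetical justification of that input. You propose to get unboundedness of $J(f)$ from ``$A(f)$ is non-empty with every component unbounded, and $\partial A(f)\subseteq J(f)$.'' Two problems. First, the inclusion $\partial A(f)\subseteq J(f)$ is not available in this generality: for quasiregular maps of transcendental type it is established (in \cite{BFN}) under an additional hypothesis such as positive lower order, and indeed this paper itself only invokes $J(F)=\partial A(F)$ in Lemma \ref{lem:Fjld} after first noting that the lower order of $F$ is positive. Second, even granting the inclusion, ``$A(f)$ has an unbounded component'' does not by itself make $\partial A(f)$ unbounded (an unbounded set can have bounded boundary); one needs an extra argument producing points of arbitrarily large modulus outside $A(f)$, which you do not supply. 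The paper's route avoids all of this: if $J(f)$ were bounded, then some neighbourhood of infinity $\{|x|>R\}$ lies in $QF(f)$, and by complete invariance its image omits the set $J(f)$, which is infinite by \cite[Theorem 1.1]{BN}; Rickman's Picard-type theorem \cite[Theorem IV.2.27]{Rickman} then forces $f$ to have a limit at infinity, contradicting transcendental type. You should either reproduce that argument or locate a precise citation for the unboundedness of $J(f)$; the chain of facts you sketch does not close the argument as written.
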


While we defer definitions until the next section, we can quickly prove this proposition here.

\begin{proof}
It is clear that $L(f)$ is closed. That $L(f)$ is non-empty relies on the fact that for a transcendental-type mapping, $J(f)$ is unbounded. To see why this is true, suppose not. Then there exists a neighbourhood of infinity $\{x:|x| >R \}$ whose image, by complete invariance of $J(f)$, must omit all of $J(f)$. By \cite[Theorem 1.1]{BN}, $J(f)$ is an infinite set, so by \cite[Theorem IV.2.27]{Rickman} $f$ has a limit at infinity. This is a contradiction.
\end{proof}

For our first main result, we generalize part of \cite[Theorem 1]{Qiao} to higher dimensions. Roughly speaking, this result says that if $f$ grows slowly, then every direction is a Julia limiting direction. For an entire quasiregular map, we denote by $M(r,f)$ and $m(r,f)$ the maximum and minimum modulus of $f$ on the sphere $S(0,r)$, that is,
\[ M(r,f) = \max_{|x| = r} |f(x)|, \quad m(r,f) = \min_{|x| =r} |f(x)|.\]

\begin{theorem}
\label{thm:1}
Let $n\geq 2$ and suppose $f:\R^n \to \R^n$ is a quasiregular mapping of transendental-type. Suppose further that there exist $\alpha >1$ and $\delta >0$ such that for all large $r$ there exists $s\in[r,\alpha r]$ such that $m(s,f) \geq \delta M(r,f)$. Then $L(f) = S^{n-1}$.
\end{theorem}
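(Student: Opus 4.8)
The plan is to prove the contrapositive: if some $\xi \in S^{n-1}$ fails to be a Julia limiting direction, then $f$ can only grow polynomially, contradicting transcendental-type. So suppose $\xi \notin L(f)$. By Definition \ref{def:1} there are $\varepsilon \in (0,1)$ and $R_0>0$ with
\[ S := \{ x \in \R^n : |x| > R_0,\ |\sigma(x) - \xi| < \varepsilon \} \subseteq \R^n \setminus J(f). \]
Since $S$ is connected and disjoint from $J(f)$, it is contained in a single quasi-Fatou component $U$ of $f$.

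The crux is to show that a quasi-Fatou component containing such a sectorial domain forces a polynomial bound on $f$ in a slightly narrower sector: there are constants $C>0$, $d>0$, $R_1 \geq R_0$ and $\varepsilon_1 \in (0,\varepsilon)$ with
\[ |f(x)| \leq C|x|^d \qquad \text{for all } x \in S_1 := \{ x : |x| > R_1,\ |\sigma(x) - \xi| < \varepsilon_1 \}. \]
To prove this I would exploit the characterization of $J(f)$ through its blow-up property: since $S$ meets no point of $J(f)$, the forward orbit of each point of $S$ omits a set of positive conformal capacity, and a covering argument should make this omission uniform on the compact pieces $\overline{S} \cap S(0,t)$. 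Feeding this into the value-distribution and growth theory for quasiregular mappings (Rickman's theory), after a logarithmic-type change of variables adapted to the cone $\{|\sigma(x)-\xi| < \varepsilon\}$, should yield the stated polynomial bound. I expect this to be the main obstacle: it is the genuinely new ingredient, and one must carefully control how the opening angle of the sector degrades through the estimates while keeping it positive.

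Granting the polynomial bound, the theorem follows quickly. Let $r$ be large and, using the hypothesis, choose $s \in [r,\alpha r]$ with $m(s,f) \geq \delta M(r,f)$. Once $s > R_1$ the sphere $S(0,s)$ meets $S_1$ — for instance at the point $s\xi$ — so picking any $x \in S(0,s) \cap S_1$ gives
\[ \delta M(r,f) \leq m(s,f) \leq |f(x)| \leq C|x|^d = Cs^d \leq C\alpha^d r^d. \]
Hence $M(r,f) \leq (C\alpha^d/\delta)\,r^d$ for all sufficiently large $r$, so $f$ has at most polynomial growth and therefore does not have an essential singularity at infinity; that is, $f$ is of polynomial-type, contradicting the hypothesis. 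Consequently no $\xi \in S^{n-1}$ can fail to be a Julia limiting direction, and $L(f) = S^{n-1}$.
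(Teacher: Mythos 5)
Your overall architecture is exactly that of the paper: pass to the contrapositive, observe that a missed direction yields a sectorial domain lying in a single quasi-Fatou component, establish a polynomial bound for $|f|$ on a slightly narrower sector, and then play this off against the minimum modulus hypothesis (your endgame, bounding $M(r,f)$ by $C\alpha^d r^d/\delta$ and invoking the fact that transcendental-type forces $\log M(r,f)/\log r \to \infty$, is essentially the paper's, which cites \cite[Lemma 3.4]{Berg05} for that fact). However, the step you yourself flag as ``the main obstacle'' --- the polynomial bound on a sector contained in a quasi-Fatou component --- is precisely the paper's key result (Theorem \ref{lem:sector}), and you have not proved it: you have only gestured at a route via the blow-up/capacity characterization of $J(f)$ and ``value-distribution theory.'' That sketch does not obviously close. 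The blow-up property constrains the forward \emph{orbits} of neighbourhoods of Julia points; it is not clear how to convert ``each point of the sector omits a set of positive capacity under iteration'' into a growth bound on $f$ itself (a single application of $f$), and no logarithmic change of variables is identified that would do this. So there is a genuine gap at the heart of the argument.

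For comparison, the paper's mechanism is entirely different and quite concrete: it uses Vuorinen's conformally invariant metric $\mu_U$, which satisfies $\mu_{f(U)}(f(x),f(y)) \leq K_I(f)\,\mu_U(x,y)$, is bounded above by the quasihyperbolic metric ($\mu_U \leq b_1 k_U + b_2$) and below by the distance-ratio metric ($\mu_U \geq c_n j_U$, valid when $\partial_\infty U$ is connected). In a sector the quasihyperbolic distance from a fixed base point grows only like $C\log|x|$ (Lemma \ref{lem:ksect}), so chaining these inequalities gives $\log\bigl(1+|f(x)-f(y)|/\mathrm{const}\bigr) \leq A\log|x| + B$, i.e.\ the polynomial bound. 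Note also two points your sketch does not engage with: the lower bound $\mu \geq c_n j$ requires connectedness of the boundary at infinity, which is why the paper works with the topological hull $T(V)$ of the image component and imposes (and then discharges, via \cite[Theorem 1.4]{NS17}) the hypothesis that this hull is a proper subset of $\R^n$; and the loss from $\eta$ to $\eta'$ in the opening angle enters only through the constant in the quasihyperbolic estimate, not through any degrading iteration. Without supplying an argument of comparable substance for the sector bound, your proposal is a correct outline rather than a proof.
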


We remark that the hypothesis on $f$ here was used in \cite[Theorem 1.6]{BDF} to obtain the conclusion that the fast escaping set $A(f)$ is a spider's web. Moreover, this condition can be fulfilled by, for example, the mappings constructed by Drasin and Sastry in \cite{DS}. The reason for needing this condition is that currently the strongest conclusion of the quasiregular counterpart of Wiman's Theorem, see \cite[Corollary 5.10]{MMV}, is that $\limsup_{r\to \infty} m(r,f) = \infty$, which is too weak to be useful for our methods.
However, the proof of Theorem \ref{thm:1} will show that we could replace the condition on the minimum modulus by one of the form
\begin{equation}
\label{eq:mrf} 
\limsup_{r\to \infty} \frac{m(r,f)}{r^m} = \infty
\end{equation}
for any positive integer $m$. This condition will be enough to conclude that the quasi-Fatou set $QF(f)$ cannot contain any sectorial regions. We refer to \cite{Zheng} for more discussion on such conditions in complex dynamics, but just observe that a strengthening of Wiman-type results for quasiregular mappings would yield an improved version of Theorem \ref{thm:1}. The fact that $\log^+ |f(z)|$ is subharmonic for entire functions $f:\C \to \C$ has led to strengthening of Wiman's Theorem in the plane, see for example Hayman's book \cite{Hayman}. One could hope for analogous results in non-linear potential theory based on the properties of $\log^+ |f(x)|$ when $f$ is entire and quasiregular in $\R^n$ could yield conclusions such as \eqref{eq:mrf}.

For our second main result, we give a partial answer to the inverse problem in $\R^3$. In complex dynamics, since $L(f)$ is contained in $S^1$, every component is either an interval or a point. In higher dimensions, components of closed subsets of $S^2$ can be much more complicated.

To state our result, we need to recall some terminology, see for example \cite{Vaisala}, although we state our definitions using the chordal metric on $S^2$ instead of the Euclidean metric on $\R^2$. Let $D \subset S^2$ be a domain, and let $x\in \partial D$. The domain $D$ is called {\it finitely connected} at $x$ if for all sufficiently small neighbourhoods $U$ of $x$, $U\cap D$ has a finite number of components. A simply connected domain $D \subset S^2$ with locally rectifiable boundary is an {\it inner chordarc domain} if $D$ is finitely connected on the boundary and there exists $c>0$ such that for each pair of boundary points $x_1,x_2$ we have
\[ \min \{ \ell (\gamma_1) , \ell(\gamma_2) \} \leq c \inf \ell ( \beta ),\]
where $\ell$ denotes the chordal length of an arc, $\gamma_1,\gamma_2$ are the components of $\partial D \setminus \{x_1,x_2\}$ and the infimum is taken over all open arcs $\beta$ in $D$ which join $x_1$ and $x_2$.

To see that our use of $S^2$ does not introduce extra complications in the definition of an inner chordarc domain, if $D \subset S^2$ is an inner chordarc domain and $\infty \in \overline{D}$, then we can apply a chordal isometry $A$ so that $\infty \notin \overline{A(D)}$. Then since the chordal and Euclidean metrics are bi-Lipschitz equivalent on compact subsets of $\R^2$, it follows that $A(D)$ is an inner chordarc domain using the usual definition involving the Euclidean metric in $\R^2$.

\begin{definition}
\label{def:dica}
A Jordan curve $J \subset S^2$ is called a {\it double inner chordarc} curve if the two components $D_1,D_2$ of $S^2 \setminus J$ are inner chordarc domains.
\end{definition}

We can now state our theorem on the inverse problem.

\begin{theorem}
\label{thm:2}
For $m\in \N$, let $J_1,\ldots, J_m$ be double inner chordarc curves in $S^2$ so that $D_1,\ldots, D_m$ is a collection of domains which are, respectively, components of $S^2 \setminus J_j$, for $j=1,\ldots, m$ and, moreover, such that $\overline{D_j}$ are pairwise disjoint.
Set $E = \cup_{j=1}^n \overline{D_j}$. Then there exists a quasiregular map $f_E:\R^3 \to \R^3$ of finite order with $L(f_E) = E$.
\end{theorem}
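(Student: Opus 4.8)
\emph{Plan.} Write $C(A)=\{t\omega:t>0,\ \omega\in A\}\subset\R^3$ for the cone over $A\subseteq S^2$, and recall that $L(f)$ is exactly the set of directions at infinity of $J(f)$, i.e.\ the accumulation set on $S^2$ of $\{\sigma(x):x\in J(f),\ |x|>R\}$ as $R\to\infty$. Since the $\overline{D_j}$ are pairwise disjoint compacta we fix pairwise disjoint Jordan domains $N_j\subsetneq S^2$ with $\overline{D_j}\subset N_j$, and inside each $N_j$ a small round spherical disc $\Delta_j$; set $P_j=C(\Delta_j)$ and $W_j=C(D_j)$. The construction of $f_E$ has three ingredients. \emph{(1)} A quasiregular map $g\colon\R^3\to\R^3$ of transcendental type and finite order such that, outside some ball, $\bigcup_{j=1}^m\overline{P_j}\subseteq J(g)$ while the rest of the far region lies in $QF(g)$, in fact in an attracting basin. \emph{(2)} An ambient quasiconformal self-homeomorphism $\psi$ of $\R^3$ which fixes the origin and $\infty$, equals the identity off $\bigcup_{j=1}^m C(N_j)$, and satisfies $\psi(P_j)=W_j$ for each $j$. \emph{(3)} Put $f_E=\psi\circ g\circ\psi^{-1}$.

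Granting (1) and (2), the theorem follows easily. As a composition of a quasiregular map with quasiconformal maps, $f_E$ is quasiregular; since $\psi$ fixes $\infty$ it has no limit at $\infty$, hence is again of transcendental type; and since $\psi^{\pm1}$ are quasisymmetric at $\infty$ one has $|\psi^{-1}(x)|\le C|x|^{\kappa}$ for large $|x|$, whence $\log M(r,f_E)=O(r^{\kappa})$ and $f_E$ has finite order. Because $\psi$ is a homeomorphism, $J(f_E)=\psi(J(g))$ and $\psi(\overline{P_j})=\overline{W_j}=C(\overline{D_j})\cup\{0\}$. Since $\psi^{-1}$ fixes $\infty$, far out $J(f_E)$ is contained in $\psi\bigl(\bigcup_j\overline{P_j}\bigr)=\bigcup_j C(\overline{D_j})\cup\{0\}$, whose directions at infinity form $E$; hence $L(f_E)\subseteq E$. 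Conversely, let $\xi\in\overline{D_j}$. The ray $\{t\xi:t>0\}$ lies in $\overline{W_j}=\psi(\overline{P_j})$, so $\psi^{-1}(t\xi)\in\overline{P_j}$; as $\psi^{-1}$ fixes $\infty$ we have $|\psi^{-1}(t\xi)|\to\infty$, so $\psi^{-1}(t\xi)\in\overline{P_j}\subseteq J(g)$ once $t$ is large, and therefore $t\xi\in\psi(J(g))=J(f_E)$. Letting $t\to\infty$ gives $\xi\in L(f_E)$. Thus $L(f_E)=E$.

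For ingredient (1) I would use quasiconformal surgery, much as in known constructions of transcendental-type examples. Each $P_j$ is itself an ambient quasiball: a M\"obius transformation of $S^2$ takes $\Delta_j$ onto a hemisphere, and in the cylindrical coordinates $\R^3\setminus\{0\}\cong S^2\times\R$, $x\mapsto(x/|x|,\log|x|)$, this extends --- preserving $|x|$ --- to an ambient quasiconformal map carrying $P_j$ onto a half-space. One then transplants a Zorich-type map (the quasiregular analogue of $\exp$, chosen so that its singular values escape and hence its Julia set is the whole half-space) into each $P_j$, lets $g$ contract towards the origin on the rest of the far region, and interpolates quasiconformally in thin collars about the (smooth) cones $\partial P_j$. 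Standard arguments then yield that $g$ is of transcendental type, has finite order (order $1$ on the Zorich pieces, the cylindrical coordinate change preserving $|x|$, and subpolynomial growth elsewhere), and that outside a ball $J(g)=\bigcup_j\overline{P_j}$; one dimension lower this resembles the construction behind the entire-function case of the inverse problem.

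The heart of the matter --- and the step I expect to be the main obstacle --- is ingredient (2). Because the $N_j$ are pairwise disjoint, $\psi$ may be taken to be a composition $\psi_m\circ\cdots\circ\psi_1$ in which each $\psi_j$ is the identity off $C(N_j)$ and carries $P_j$ onto $W_j$; so everything reduces to the following. \emph{Lemma: let $D\subset S^2$ be an inner chordarc domain whose boundary is a double inner chordarc curve, let $\Delta$ be a round disc, and suppose $\overline{\Delta}$ and $\overline{D}$ lie in a Jordan domain $N\subsetneq S^2$; then there is an ambient quasiconformal self-homeomorphism of $\R^3$, equal to the identity off $C(N)$ and fixing the origin and $\infty$, which takes $C(\Delta)$ onto $C(D)$.} Its core content is that the sectorial domain $C(D)$ is an ambient quasiball --- equivalently, that the topological $2$-sphere obtained from the cone over $J=\partial D$ by adjoining the origin and $\infty$ is a quasisphere in $S^3$, equivalently that it admits a quasiconformal reflection; the remaining requirements (identity off $C(N)$, fixing $0$ and $\infty$) are then arranged by a quasiconformal isotopy supported near the cone. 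The naive reflection --- reflecting each sphere $S(0,r)$ in the circle $rJ$ --- is quasiconformal only when $J$ is a quasicircle, which the hypotheses do not supply: the two inner chordarc conditions exclude cusps but still permit, for instance, spiralling. The plan is instead to work in the cylinder $S^2\times\R$, where $C(D)$ becomes $D\times\R$, and to build a reflection that is allowed to move points in the extra $\R$-coordinate, using it to ``unwind'' $J$. Concretely, I would combine the Riemann maps of $D$ and of $S^2\setminus\overline{D}$ with half-space-type extensions of them, modified so as to be governed by the inner rather than Euclidean boundary geometry, and check that the two inner chordarc bounds are exactly what is needed to control the dilatations and to glue the extensions quasiconformally across $J\times\R$; controlling that dilatation is the crux.
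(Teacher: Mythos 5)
Your overall architecture is essentially the paper's: conjugate a model quasiregular map whose Julia set lives in ``straight'' cones/half-spaces by an ambient quasiconformal map that carries those straight pieces onto the sectors $C(D_j)$, and observe that Julia limiting directions are preserved under such a conjugation. You have also correctly identified the crux, namely that the sectorial domain $C(D)$ over an inner chordarc domain is an ambient quasiball. However, both of your key ingredients are left as plans rather than proofs, and each conceals real work.

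For ingredient (2), your proposed route --- building an explicit quasiconformal reflection across the cone over $J$ by working in the cylinder $S^2\times\R$ and ``unwinding'' via Riemann maps --- is exactly the hard, uncontrolled part. The paper avoids constructing any reflection by hand: an inverse branch of the Zorich map carries $C(D)$ (after a preliminary bi-Lipschitz shrinking into a half-space) onto a cylinder $D'\times\R$ whose cross-section still satisfies the internal chordarc condition, and V\"ais\"al\"a's theorem on quasiconformal maps of cylindrical domains then gives a quasiconformal map of that cylinder onto $\B^3$. Applying the same to the complementary sector and invoking Gehring's three-dimensional extension theorem yields the \emph{ambient} quasiconformal map. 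Without citing (or reproving) results of this strength, your ``check that the two inner chordarc bounds control the dilatations'' step is not a proof; it is precisely the open end of the argument.

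For ingredient (1), you assert that a surgered map $g$ with $J(g)$ equal, far out, to the union of round cones exists ``by standard arguments.'' Locating the Julia set of such a construction is not standard: the paper takes the existing Nicks--Sixsmith map $F$ (a $\R^3$-analogue of $e^z+z$, identity in a half-space) and still has to prove two nontrivial lemmas to pin down $L(F)$ --- a covering lemma showing images of balls in a suitable paraboloidal region contain continua separating $0$ from $\infty$, and an argument via the fast escaping set (using that $J(F)=\partial A(F)$ when the lower order is positive, and that components of $A(F)$ are unbounded) to conclude that $J(F)$ meets every such ball, hence that $L(F)$ is the closed upper hemisphere. Your claim that the transplanted Zorich piece has ``Julia set the whole half-space'' is not justified and is not what one gets for free; some version of this fast-escaping-set argument is needed. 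Finally, a smaller point: the single global conjugation $f_E=\psi\circ g\circ\psi^{-1}$ requires $\psi$ to be supported in disjoint cones $C(N_j)$, which is an additional (unproved) localization of the ambient quasiball map; the paper sidesteps this by conjugating sector by sector and gluing maps that are each the identity outside their own sector.
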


By considering the second iterate $f^2$ instead of $f$, noting that the constructed map $f_E$ is locally Lipschitz and applying \cite[Theorem 1.6 (v)]{BN}, one can also produce an example of infinite order satisfying the conclusion of Theorem \ref{thm:2}. We restrict to $\R^3$ because our proof relies on a construction of Nicks and Sixsmith in \cite{NS18} which yields a quasiregular map of transcendental-type that is the identity in a half-space. The double inner chordarc condition is required for our approach because it guarantees the sector in $\R^3$ which in spherical coordinates has cross-section $D_j$ is an ambient quasiball, in other words, there is an ambient quasiconformal map $g:\R^3 \to \R^3$ with $g(D_j)$ equal to a half-space via results of Gehring \cite{Gehring} and V\"ais\"al\"a \cite{Vaisala}. The connection with the construction of Nicks and Sixsmith is then apparent.

The paper is organized as follows. In section 3, we recall material from the various topics we will need to prove our main results. In section 4, we prove Theorem \ref{thm:1} with the main highlight showing that a sectorial domain contained in a component of the quasi-Fatou set yields a bound on the growth of the function. Finally in section 5, we prove Theorem \ref{thm:2}, along the way showing the certain sectorial domains in $\R^3$ are ambient quasiballs.

{\bf Acknowledgements:} The author would like to thank Jun Wang for introducing the author to the topic of Julia limiting directions in complex dynamics, Rod Halburd for organizing the {\it CAvid} online seminar series that provided the venue for this introduction and Dan Nicks for providing some helpful comments. 

\section{Preliminaries}

\subsection{Quasiregular mappings}

We refer to Rickman's monograph \cite{Rickman} for a complete exposition on quasiregular mappings. Below we just recall the basics.

A continuous map $f: E \to \R^n$ defined on a domain $E\subset \R^n$ is called \emph{quasiregular}  if $f$ belongs to the Sobolev space $W^{1,n}_{\text{loc}}(E)$ and if there exists some $K\geq 1$ such that 
\[ |f'(x)|^n \leq K J_f(x), \qquad \text{for a.e. $x\in E$}.\]
Here $J_f$ denotes the Jacobian of $f$ at $x\in E$ and $|f'(x)|$ the operator norm. The smallest such $K$ for which this inequality holds is called the {\it outer dilatation} and denoted $K_O(f)$. If $f$ is quasiregular, then we also have
\[ J_f(x) \leq K' \min _{|h|=1}  |f'(x)(h)|, \qquad \text{ for a.e. $x\in E$}.\]
The smallest $K'$ for which this inequality holds is called the {\it inner dilatation} and denoted $K_I(f)$. Then the {\it maximal dilatation} of a quasiregular map $f$ is $K(f) = \max \{K_O(f) , K_I(f) \}$. If $K(f) \leq K$, we then say that $f$ is $K$-quasiregular. 

An injective quasiregular mapping is called {\it quasiconformal}. A {\it quasiball} $B_1$ in $\R^n$ is the image of the unit ball $\B^n$ under a quasiconformal map $f:\B^n \to B_1$ and we call an {\it ambient quasiball} $B_2$ the image of the unit ball $\B^n$ under a quasiconformal map $f:\R^n \to \R^n$, that is, $f(\B^n) = B_2$. We note that there does not appear to be consistency in the literature in how the term quasiball is defined: for example, in dimension $2$ a quasidisk always refers to an ambient quasidisk in our terminology, but in higher dimensions, a quasiball usually (but not always!) does not mean an ambient quasiball.

A non-constant entire quasiregular map $f:\R^n\to \R^n$ is said to be of {\it polynomial-type} if
\[ \lim_{x\to \infty} f(x) = \infty,\]
otherwise $f$ is said to be of {\it transcendental-type}. The order of growth of an entire quasiregular mapping in $\R^n$ is given by \cite[p.121]{Rickman}:
\[ \mu_f = \limsup_{r\to \infty} (n-1) \frac{\log\log M(r,f)}{\log r}.\]

If $f:\R^n \to \R^n$ is quasiregular, then the {\it Julia set} $J(f)$ of $f$ is defined to be the set of all $x\in \R^n$ such that
\[ \operatorname{cap} \left ( \R^n \setminus \bigcup_{k=1}^{\infty} f^k(U) \right ) = 0\]
for every neighbourhood $U$ of $x$. Here, $\operatorname{cap}$ denotes the conformal capacity of a condenser: we refer to \cite[p.53]{Rickman} for the definition. Essentially what this means is that the forward orbit of any neighbourhood of a point in the Julia set can only omit a small set.

The {\it quasi-Fatou set} $QF(f)$ is then the complement of $J(f)$. We remark that one key difference with complex dynamics is that there is no assumed notion of normality of the family of iterates on components of the quasi-Fatou set.

\subsection{Zorich maps}
\label{sect:z}

Zorich maps are higher dimensional analogues of the exponential function $e^z$. We will fix a particular Zorich map $Z:\R^3 \to \R^3$, following the presentation in \cite[Section 5]{NS18}. With $M(x_1,x_2) = \max \{ |x_1| , |x_2| \}$, define a bi-Lipschitz map $h$ from the square $[-1,1]^2$ to the upper faces of the square based pyramid with vertex $(0,0,1)$ and base $[-1,1]^2 \times \{ 0 \}$ by setting
\[ h(x_1,x_2) = (x_1,x_2, 1-M(x_1,x_2) ).\]
See \ref{fig:1}.

\begin{figure}[h]
\begin{center}
\includegraphics{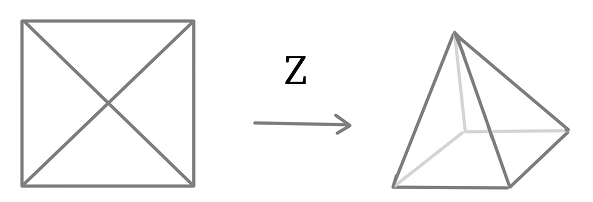}
\end{center}
\caption{The image of the square $[-1,1]^2$ under $Z$.}
\label{fig:1}
\end{figure}

Then in the infinite square cylinder $[-1,1]^2 \times \R$, we define $Z$ via
\[ Z(x_1,x_2,x_3) = e^{x_3}h(x_1,x_2),\]
with image the upper half-space $\{(x_1,x_2,x_3) : x_3 \geq 0 \} \setminus  \{ {\bf 0} \}$, writing ${\bf 0}$ for the origin in $\R^3$. This is then extended via repeated reflections in the faces of the square cylinder in the domain, and in the plane $\{ (x_1,x_2,x_3) : x_3=0 \}$ in the range to yield a quasiregular map $Z:\R^3 \to \R^3 \setminus \{ {\bf 0} \}$ of transcendental-type.

By construction, the image of $[-1,3]\times[-1,1] \times \R$ under $Z$ is all of $\R^3 \setminus \{ {\bf 0} \}$. By removing some of the boundary, we find a set $E$ whose closure is $[-1,3] \times [-1,1] \times \R$ and a branch of the inverse $Z^{-1} : \R^3 \setminus \{ {\bf 0} \} \to E$. We refer to \cite[Section 3.2]{FP} for more details on constructing such branches of the inverse of a Zorich map.

The Zorich map $Z$ is strongly automorphic (that is, periodic and transitive on fibres) with respect to a discrete group of isometries $G$. The group $G$ is generated by translations $x \mapsto x + (4,0,0)$, $x\mapsto x+(0,4,0)$ and a rotation by $\pi$ fixing the line $\{ (1,1,t) : t\in \R \}$. In particular, we note that any beam of the form $[2n-1,2n+1]\times [2m-1,2m+1] \times \R$ is mapped onto a closed half-space with the origin removed.

\subsection{Hyperbolic-type metrics}

For a proper subdomain $U$ of $\R^n$, recall from \cite[p.103]{Vuorinen} the metric $\mu_U$. This metric is conformally invariant and can be viewed as a substitute for the hyperbolic metric on domains which do not carry it. A key property of $\mu_U$ is how it behaves under quasiregular mappings. If $f:U \to V$ is a non-constant quasiregular mapping with inner dilatation $K_I(f)$, then by \cite[Theorem 10.18]{Vuorinen} we have
\begin{equation}
\label{eq:mu}
\mu_{f(U)} ( f(x) , f(y) ) \leq K_I(f) \mu_U(x,y)
\end{equation}
for all $x,y\in U$. If $G$ is a subdomain of $U$, then by \cite[Remark 8.5]{Vuorinen}, $\mu$ satisfies the subordination principle
\begin{equation}
\label{eq:sub} 
\mu_U(x,y) \leq \mu_G(x,y)
\end{equation}
for all $x,y\in G$.

There are two other hyperbolic-type metrics on a proper subdomain $U$ of $\R^n$ that we will need.
First, the quasihyperbolic metric $k_U$ is a conformal metric with density $w(x) = d(x,\partial U)^{-1}$. This means that
\[ k_U(x,y) = \inf _{\gamma} \int_{\gamma} \frac{|dt|}{d(t,\partial U) },\]
for all $x,y \in U$ and where the infimum is taken over all paths $\gamma$ in $U$ joining $x$ to $y$. Clearly $k$ also obeys a subordination principle: if $G$ is a subdomain of $U$, then
\begin{equation}
\label{eq:subk}
k_U(x,y) \leq k_G(x,y)
\end{equation}
for all $x,y \in G$.
Second, the distance-ratio metric $j_U$ is defined by
\[ j_U(x,y) = \log \left ( 1 + \frac{ |x-y| }{\min \{ d(x,\partial U) , d(y,\partial U) \} } \right ),\]
for $x,y\in U$.

If $U$ is as above, then by \cite[Lemma 8.30 (2)]{Vuorinen}, there exist constants $b_1$ and $b_2$, depending only on $n$, such that
\begin{equation}
\label{eq:muk}
\mu_U(x,y) \leq b_1 k_U(x,y) + b_2
\end{equation}
for all $x,y \in U$. If $U$ is a proper subdomain of $\R^n$, then we denote by $\partial_{\infty}U$ the boundary of $U$ in $\R^n \cup \{ \infty \}$. If $\partial_{\infty}U$ is connected, by \cite[Lemma 8.31]{Vuorinen}, there exists a constant $c_n$, depending only on $n$, such that
\begin{equation}
\label{eq:muj}
\mu_U(x,y) \geq c_n j_U(x,y)
\end{equation}
for all $x,y\in U$. This result will be used in the following context: if $U$ is a full subdomain of $\R^n$, that is, $U$ has no bounded complementary components, then $(\R^n \cup \{ \infty \}) \setminus U$ is connected and hence so is $\partial_{\infty} U$ by, for example, \cite[Theorem 1.6(3)]{GMI}.

\section{Every direction is Julia limiting}

In this section, we will prove Theorem \ref{thm:1}. First, we define the following sectorial regions in $\R^n$, for $n\geq 2$. For $x_0\in \R^n$, $\theta \in S^{n-1}$ and $0< \eta \leq \pi$, let
\[ \Omega(x_0,\theta,\eta) = \{x\in \R^n : d ( \sigma(x-x_0) , \theta ) < \eta \},\]
where we recall that $\sigma(x) \in S^{n-1}$ is the spherical component of $x$ in spherical coordinates, and for $p,q\in S^{n-1}$, $d(p,q)$ denotes the angle between the line segments (in $\R^n$) $[0,p]$ and $[0,q]$.

First, we need an estimate on the quasihyperbolic metric in sectorial regions. We write $e_1 = (1,0,\ldots, 0) \in S^{n-1}$.

\begin{lemma}
\label{lem:ksect}
For $x_0\in \R^n$, $\theta\in S^{n-1}$ and $0<\eta \leq \pi$, fix $x_1$ on the ray $R = \{x\in \R^n : \sigma(x-x_0) = \theta \}$.
Then there exists a constant $C>0$ depending only on $\eta$ such that for $x\in R$ with $|x| > |x_1|$, we have
\[ k_{\Omega(x_0,\theta,\eta )} ( x,x_1) \leq C \log \frac{|x|}{|x_1|} .\]
\end{lemma}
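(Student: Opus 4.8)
The plan is to parametrize the ray $R$ and estimate the integrand $1/d(t,\partial\Omega)$ directly along this path. Write $\Omega = \Omega(x_0,\theta,\eta)$. For a point $t$ lying on the ray $R$ at distance $\rho = |t - x_0|$ from the vertex $x_0$, I would first establish the geometric fact that the distance from $t$ to the boundary of $\Omega$ is comparable to $\rho \sin\eta$ — more precisely, $d(t,\partial\Omega) = \rho \sin\eta$ when $\eta \le \pi/2$, since the nearest boundary point lies on the conical surface $\{d(\sigma(x-x_0),\theta)=\eta\}$, and a perpendicular from $t$ to the cone has length $\rho\sin\eta$; when $\eta > \pi/2$ the sector is all of $\R^n$ minus a smaller cone (or is all of $\R^n$ if $\eta=\pi$), and one still gets $d(t,\partial\Omega) \ge c(\eta)\rho$ for an appropriate constant $c(\eta)>0$. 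In all cases there is a constant $c(\eta)>0$, depending only on $\eta$, with $d(t,\partial\Omega) \ge c(\eta)|t-x_0|$ for $t\in R$.

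Next I would integrate along $R$ from $x_1$ to $x$. Parametrizing by arc length starting from the vertex, with $s = |t - x_0|$ ranging over $[|x_1-x_0|, |x-x_0|]$, the definition of $k_\Omega$ gives
\[
k_\Omega(x,x_1) \le \int_{|x_1-x_0|}^{|x-x_0|} \frac{ds}{d(t,\partial\Omega)} \le \frac{1}{c(\eta)} \int_{|x_1-x_0|}^{|x-x_0|} \frac{ds}{s} = \frac{1}{c(\eta)} \log \frac{|x-x_0|}{|x_1-x_0|}.
\]
Finally I would pass from $|x-x_0|$ to $|x|$: since $x, x_1$ lie on the same ray $R$ emanating from $x_0$ with $|x|>|x_1|$, and $x_0$ is a fixed point, elementary estimates (e.g. $|x| \le |x-x_0| + |x_0|$ and, for $|x|$ large compared to $|x_0|$, $|x-x_0| \le 2|x|$, with a matching lower bound) show that $\log\bigl(|x-x_0|/|x_1-x_0|\bigr) \le \log\bigl(|x|/|x_1|\bigr) + O(1)$, and the additive constant can be absorbed by enlarging $C$. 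Since $|x_1|$ is fixed, one can also simply choose $C$ large enough to handle all $x\in R$ with $|x|>|x_1|$, including the bounded range.

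The main obstacle — really the only subtlety — is the case analysis for the distance-to-boundary estimate when $\eta \ge \pi/2$, and keeping track of the dependence of the constant on $\eta$ only (not on $x_0$, $x_1$, or $n$); this follows because the whole configuration scales: $d(t,\partial\Omega)$ is homogeneous of degree one in $|t-x_0|$ with a proportionality factor determined solely by the opening angle $\eta$. Replacing $R$ by a straight segment as the competitor path (rather than optimizing) is harmless since we only need an upper bound for $k_\Omega$.
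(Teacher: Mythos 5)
Your proposal is correct and follows essentially the same route as the paper's proof: both reduce to the observation that along the ray one has $d(t,\partial\Omega)=|t-x_0|\sin\eta$ for $\eta<\pi/2$ and $d(t,\partial\Omega)=|t-x_0|$ for $\eta\ge\pi/2$, and then integrate $ds/s$ along the ray; the paper simply normalizes $x_0=0$ and $\theta=e_1$ by a translation and rotation first, which sidesteps your final bookkeeping converting $|x-x_0|$ to $|x|$. One small slip: for $\eta=\pi$ the sector is $\R^n$ minus the closed ray $\{x_0-t\theta: t\ge 0\}$, not all of $\R^n$ (otherwise $k_\Omega$ would be undefined), but your asserted bound $d(t,\partial\Omega)\ge c(\eta)\,|t-x_0|$ still holds in that case, so the argument is unaffected.
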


\begin{proof}
By applying a translation and a rotation, and noting the invariance of the quasihyperbolic metric under such maps, we may assume that $x_0=0$ and $\theta = e_1$.

First, if $\eta \geq \pi/2$, then for $x\in R$ we have that $d(x,\partial \Omega)$ is realised by $d(x, {\bf 0})$ and hence $d(x,\partial \Omega) = |x|$. The result then follows with $C=1$, since the path realising $k_{\Omega}(x,x_1)$ will be the line segment joining $x$ and $x_1$ contained in the ray $R$.

If  $\eta <\pi/2$, then by Euclidean geometry we have $d(x,\partial \Omega) = |x| \sin \eta$. The result again follows, this time with $C = 1/\sin \eta$.
\end{proof}

Our main tool is the following result, a higher dimensional version of \cite[Lemma 1]{Qiao}. For a proper subset $U$ of $\R^n$, we write $T(U)$ for the topological hull of $U$, that is, the union of $U$ with all its bounded complementary components.

\begin{theorem}
\label{lem:sector}
Let $f:\R^n \to \R^n$ be a $K$-quasiregular mapping of transcendental-type. Suppose that for some $x_0 \in \R^n$, $\theta \in S^{n-1}$ and $\eta >0$ the sector $\Omega(x_0,\theta,\eta)$ is contained in a component $U$ of the quasi-Fatou set $QF(f)$ for which $T(U)$ is a proper subset of $\R^n$. Then if $0 < \eta ' < \eta$, there exists a constant $d$ depending only on $n,K$ and $\eta-\eta '$ such that
\[ |f(x)| = O( |x|^d)\]
for $x\in \Omega(x_0,\theta, \eta')$.
\end{theorem}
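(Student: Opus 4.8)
The plan is to carry over Qiao's planar argument to $\R^n$, replacing the hyperbolic metric by the conformally invariant metric $\mu$ and feeding everything through the comparison inequalities of the preliminaries. After a translation and a rotation — under which $k_U$, $\mu_U$ and the hypotheses are invariant — I may assume $x_0=0$ and $\theta=e_1$, and I fix once and for all a point $x_1$ on the central ray $R=\{te_1:t>0\}$. For $x\in\Omega(0,e_1,\eta')$ with $|x|$ large, write $x^\ast=|x|e_1\in R$ and estimate $k_{\Omega(0,e_1,\eta)}(x,x_1)$ via the triangle inequality through $x^\ast$: the radial part $k_{\Omega(0,e_1,\eta)}(x^\ast,x_1)$ is $O(\log|x|)$ by Lemma~\ref{lem:ksect}, while the transverse part $k_{\Omega(0,e_1,\eta)}(x,x^\ast)$ is bounded by a constant depending only on $\eta-\eta'$, since the short spherical arc on $S(0,|x|)$ from $x$ to $x^\ast$ has length at most $\pi|x|$ and stays at Euclidean distance comparable to $|x|$ (with implied constant depending only on $\eta-\eta'$) from $\partial\Omega(0,e_1,\eta)$. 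Thus $k_{\Omega(0,e_1,\eta)}(x,x_1)\le C_1\log|x|+C_2$, and since $\Omega(0,e_1,\eta)\subseteq U$, applying in turn subordination~\eqref{eq:subk}, the comparison~\eqref{eq:muk} and the distortion estimate~\eqref{eq:mu} yields
\[
\mu_{f(\Omega(0,e_1,\eta))}(f(x),f(x_1))\ \le\ K\,\mu_{\Omega(0,e_1,\eta)}(x,x_1)\ \le\ C_3\log|x|+C_4 .
\]

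The complementary lower bound needs, on the image side, a \emph{proper} subdomain $W\subseteq\R^n$ with $\partial_\infty W$ connected and $f(\Omega(0,e_1,\eta))\subseteq W$. Granting this, subordination~\eqref{eq:sub} gives $\mu_W\le\mu_{f(\Omega(0,e_1,\eta))}$, and~\eqref{eq:muj} gives $\mu_W(f(x),f(x_1))\ge c_n j_W(f(x),f(x_1))$; since $x_1$ is fixed, $\rho_1:=d(f(x_1),\partial W)$ is a fixed positive number, so
\[
\mu_W(f(x),f(x_1))\ \ge\ c_n\log\!\Big(1+\tfrac{|f(x)-f(x_1)|}{\rho_1}\Big)\ \ge\ c_n\log|f(x)|-C_5
\]
once $|f(x)|$ is large. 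Combining this with the first display gives $\log|f(x)|\le C_6\log|x|+C_7$, that is, $|f(x)|=O(|x|^d)$ with $d=C_6$ depending only on $n$, $K$ and the opening data.

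It remains to produce $W$, and this is where the hypothesis $T(U)\ne\R^n$ is used. Since a bounded union of bounded sets is bounded, $T(U)\ne\R^n$ forces $U$ to have an \emph{unbounded} complementary component $\Sigma$. By forward invariance of $QF(f)$ (complete invariance of $J(f)$), the connected set $f(\Omega(0,e_1,\eta))\subseteq f(U)\subseteq QF(f)$ lies in a quasi-Fatou component $U'$, and $U'\ne\R^n$ since $J(f)\ne\varnothing$. I claim $U'$ also has an unbounded complementary component $\Sigma''$; granting this, take $W$ to be the component of $\R^n\setminus\Sigma''$ containing $U'$ (hence $f(\Omega(0,e_1,\eta))$). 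Then $W$ is a proper subdomain, and $\partial_\infty W$ is connected by the argument recalled after~\eqref{eq:muj} (attaching $\Sigma''\cup\{\infty\}$ to all other complementary components shows $(\R^n\cup\{\infty\})\setminus W$ is connected). To see the claim, note that $U'$, being a component of $QF(f)$ disjoint from $\partial U\subseteq J(f)$, lies in a single complementary component of $U$; a short case analysis — according to whether that component equals $\Sigma$, equals another complementary component of $U$, or $U'=U$ — exhibits $\Sigma$ itself, or the connected unbounded set $\R^n\setminus(\text{that component})$, or $\Omega(0,e_1,\eta)$, as a connected unbounded subset of $\R^n\setminus U'$, so some complementary component of $U'$ is unbounded.

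I expect the genuinely delicate point to be exactly this last claim — that the unbounded complementary component of $U$ handed to us by $T(U)\ne\R^n$ forces one for the image component $U'$ — and, more generally, the topological bookkeeping ensuring that every domain entering the metric chain is a proper subdomain of $\R^n$ with connected boundary at infinity, since quasiregular images need not preserve these properties in dimension $n\ge 3$. Everything else is the formal composition of the comparison inequalities \eqref{eq:subk}, \eqref{eq:muk}, \eqref{eq:mu}, \eqref{eq:sub}, \eqref{eq:muj} with the elementary sector geometry underlying Lemma~\ref{lem:ksect}.
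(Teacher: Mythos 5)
Your argument is correct and follows the same overall strategy as the paper: bound $k$ in the sector by $O(\log|x|)$, push through the chain \eqref{eq:muk}, \eqref{eq:mu}, \eqref{eq:sub}, \eqref{eq:muj}, and read off a polynomial bound from the $j$-metric. The one place where you genuinely diverge is the key topological step, namely producing a proper subdomain $W$ on the image side with $\partial_\infty W$ connected. The paper takes $W=T(V)$, where $V$ is the quasi-Fatou component containing $f(\Omega)$, and rules out $T(V)=\R^n$ by quoting the Nicks--Sixsmith theorem that an unbounded hollow quasi-Fatou component is completely invariant (which would force $T(U)=\R^n$). You instead argue directly that $V$ has an unbounded complementary component: either $V=U$, in which case the unbounded complementary component $\Sigma$ of $U$ (which exists precisely because $T(U)\neq\R^n$) sits in $\R^n\setminus V$, or $V\neq U$, in which case the unbounded connected set $U\supseteq\Omega$ sits in $\R^n\setminus V$. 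This is more elementary and avoids the citation entirely; it buys a self-contained proof at the cost of the small amount of unicoherence bookkeeping you do to get $\partial_\infty W$ connected (which the paper also needs, via the same reference to Garc\'ia-M\'aynez--Illanes). Your geometric decomposition of the quasihyperbolic estimate (go transversally to the central ray at constant modulus, then radially inward) is also the mirror image of the paper's (go radially inward along the ray through $x$, then transversally to the fixed base point), but both give $k_{\Omega}(x,x_1)\le C\log|x|+C'$ with $C$ controlled by $\eta-\eta'$.

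Two small slips worth fixing, neither fatal. First, the correspondence in your three-way case analysis is scrambled: if $U'=U$ then $\Omega\subseteq U'$, so $\Omega$ cannot serve as the unbounded subset of $\R^n\setminus U'$ (use $\Sigma$ there), and if $U'\subseteq\Sigma$ then $\Sigma$ is not contained in $\R^n\setminus U'$ (use $\Omega$, or $\R^n\setminus\Sigma$, there); as noted above the whole analysis collapses to the two cases $U'=U$ and $U'\neq U$. Second, the remark that ``a bounded union of bounded sets is bounded'' is not the right justification for extracting $\Sigma$: the correct (and trivial) point is that if every complementary component of $U$ were bounded, then $T(U)$, being $U$ together with all of its complementary components, would be all of $\R^n$.
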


The condition that $T(U)$ is a proper subset of $\R^n$ is redundant in the special case that $f$ is a transcendental entire function in the plane. Baker \cite[Theorem 3.1]{Baker84} showed that every multiply connected Fatou component is bounded. It is still open if the same is true for components of $QF(f)$ in the quasiregular setting.

\begin{proof}
Suppose that $\Omega := \Omega(x_0,\theta, \eta)$ is contained in the component $U$ of the quasi-Fatou set $QF(f)$. Then $f(\Omega)$ belongs to a component $V$ of $QF(f)$. It follows that $T(V)$ is a proper subset of $\R^n$. If not, then $V$ must necessarily be an unbounded and hollow (that is, not full) component of $QF(f)$. It follows by \cite[Theorem 1.4]{NS17} that $V$ is completely invariant and hence $T(U)$ is all of $\R^n$, contrary to the hypothesis.
Hence, by the discussion after equation \eqref{eq:muj}, $\partial_{\infty}T(V)$ is a connected subset of $\R^n \cup \{\infty\}$ and we may apply \eqref{eq:muj} to $T(V)$.

By the subordination principle \eqref{eq:sub}, \eqref{eq:mu}, \eqref{eq:sub} again and \eqref{eq:muj}, we have for any $x,y\in \Omega$ that
\begin{align*}
K_I(f) \mu_{\Omega}(x,y) &\geq K_I(f) \mu_U(x,y) \\
&\geq \mu_V(f(x),f(y)) \\
&\geq \mu_{T(V)} ( f(x),f(y)) \\
& \geq c_n j_{T(V)} ( f(x),f(y) ).
\end{align*}

We now fix $y \in \Omega$. For definiteness and without loss of generality we may take $y$ to be on the ray $R = \{ z \in \R^n : \sigma(z-x_0) = \theta \}$ with $|y-x_0|=1$. Then by the above chain of inequalities, \eqref{eq:muk} and the definition of $j_{T(V)}$, for $x\in \Omega$ we have
\begin{equation}
\label{eq:1}
c_n \log \left ( 1 + \frac{ |f(x) - f(y)|}{ \min \{ d(f(x) , \partial T(V) ) , d(f(y) , \partial T(V)) \} } \right ) \leq K_I(f) \left ( b_1 k_{\Omega}(x,y) + b_2 \right ).
\end{equation}

We now restrict $x$ to be in the sector $\Omega' = \Omega(x_0,\theta, \eta')$. Given such an $x$, let $R_x$ be the ray emanating from $x_0$ and passing through $x$. If $y' $ is the unique point in $R_x$ such that $|y'| = |y|$, then for $|x|$ large enough, we have
\begin{equation} 
\label{eq:2}
k_{\Omega}(x,y) \leq k_{\Omega}(x,y') + k_{\Omega}(y',y) \leq 2 k_{\Omega}(x,y').
\end{equation}
Setting $\Omega_x$ to be the maximal sector of the form $\Omega(x_0,\sigma(x-x_0) , t)$ contained in $\Omega$, we apply Lemma \ref{lem:ksect} to $\Omega_x$ to see that there is a constant $C$ depending on $\eta - \eta'$ such that
\begin{equation}
\label{eq:3}
k_{\Omega_x}(x,y') \leq C \log \frac{|x|}{|y'|}.
\end{equation}

Combining \eqref{eq:subk}, \eqref{eq:1}, \eqref{eq:2} and \eqref{eq:3}, there are constants $A,B$ depending only on 
$K_I(f)$, $n$ and $\eta - \eta'$ such that
\begin{equation}
\label{eq:4}
\log \left ( 1 + \frac{ |f(x) - f(y)|}{ \min \{ d(f(x) , \partial T(V) ) , d(f(y) , \partial T(V)) \} } \right )
\leq A \log \frac{ |x|}{|y| } + B.
\end{equation}

Using the facts that $\log$ is increasing, that $y$ is fixed and that  
\[ \min \{ d(f(x) , \partial T(V)) , d(f(y) , \partial T(V) ) \} \leq d(f(y) , \partial T(V) ),\]
\eqref{eq:4} yields constants $A', B'$, again depending only on $K_I(f)$, $n$ and $\eta - \eta'$ such that
\[ |f(x)| \leq A' |x|^{B'}\]
for all $|x| \in \Omega '$ with sufficiently large modulus. The result now follows.
\end{proof}

We can now prove Theorem \ref{thm:1}.

\begin{proof}[Proof of Theorem \ref{thm:1}]
Suppose that $L(f)$ is not all of $S^{n-1}$. Then since $L(f)$ is closed, the complement in $S^{n-1}$ is open. Hence if $\xi \in S^{n-1} \setminus L(f)$, there exist $\eta >0$ and $x_0 \in \R^n$ such that $\Omega(x_0,\xi , \eta) \cap J(f) = \emptyset$. Therefore, by Theorem \ref{lem:sector}, for $0<\eta' < \eta$, we have
\[ |f(x)| = O(|x|^d)\]
for $x\in \Omega(x_0,\xi , \eta ')$. In particular, we can choose $R_1 >0$ large enough so that there exists $C_1>0$ with
\begin{equation}
\label{eq:5}
|f(x) | \leq C_1 |x|^d
\end{equation}
for $x\in \Omega(x_0,\xi , \eta ')$ and $|x| >R_1$. Now, by \cite[Lemma 3.4]{Berg05}, since $f$ is of transcendental-type, we have
\[ \lim_{r\to \infty} \frac{\log M(r,f) }{\log r} = \infty.\]
We may choose $R_2>0$ large enough that
\[ \log M(r,f) > d\log r + \log \left ( \frac{2C_1 \alpha^d}{\delta} \right ),\]
for $r>R_2$, recalling $\alpha$ and $\delta$ from the minimum modulus hypothesis. From this and the hypotheses, for $r > \max \{R_1,R_2\}$ and some $s\in[r,\alpha r]$, we have
\begin{align*}
m(s,f) & \geq \delta M(r,f) \\
&\geq \delta \left ( \frac{2C_1 \alpha^d}{\delta} \right ) r^d \\
&= 2C_1 \alpha^d \left ( \frac{r}{s} \right )^d s^d \\
&\geq 2C_1 s^d.
\end{align*}
This contradicts \eqref{eq:5} and proves the theorem.
\end{proof}

\section{The inverse problem}

In this section, we will prove Theorem \ref{thm:2}. 

\subsection{The construction of Nicks and Sixsmith}

The main tool here will be a construction from \cite[Theorem 5]{NS18} of a quasiregular map $F:\R^3 \to \R^3$ of transcendental-type which is a higher dimensional version of $e^z +z$. A key property of this map is that it is the identity in half-space $H = \{ (x_1,x_2,x_3) \in \R^3 : x_3<0 \}$.
We also note that since $F$ agrees with $Z+Id$ in a half-space $H' = \{ (x_1,x_2,x_3) : x_3 > T\}$ for some $T>0$, where $Z$ is the Zorich map from Section \ref{sect:z}, and is the identity in another half-space with bi-Lipschitz interpolations between, it follows that the order is
\begin{equation}
\label{eq:order}
\mu_F = \limsup_{r\to \infty} (3-1) \frac{\log \log M(r,f)}{\log r} = \limsup_{r\to \infty} 2 \frac{ \log \log (e^r+r)}{\log r} = 2.
\end{equation}
Moreover, the lower order (where the $\limsup$ is replaced by $\liminf$) is also $2$.

In \cite{NS18}, the authors did not need any information on the Julia set. We, however, do need to identify the rough location of certain points in the Julia set to be able to determine the set of Julia limiting points. First, we need a covering result.

\begin{lemma}
\label{lem:covering}
There exist constants $\alpha >T, \beta>0$ such that if
\[ B(x,\beta) \subset U_{\alpha} := \{ (x_1,x_2,x_3)\in \R^3 : x_3 > \max ( \alpha, (x_1^2+x_2^2)^{1/4} ) \}, \] 
then there is a continuum in $F(B(x,\beta))$ that separates ${\bf 0}$ from $\infty$.
\end{lemma}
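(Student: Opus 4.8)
The plan is to exploit the fact that, deep inside the region $U_\alpha$, the map $F$ is essentially $Z+\mathrm{Id}$, where $Z$ is the fixed Zorich map, and then to use the known geometry of $Z$ to locate a continuum that separates $\mathbf 0$ from $\infty$. First I would fix $\alpha>T$ large enough that $U_\alpha \subset H' = \{x_3>T\}$, so that $F = Z + \mathrm{Id}$ on all of $U_\alpha$. On $U_\alpha$ we have $|x| \leq x_3 + (x_1^2+x_2^2)^{1/2} \leq x_3 + x_3^2 \leq 2x_3^2$ when $x_3\geq 1$, so the translation term $\mathrm{Id}$ is small relative to $Z$: recall that on a beam $[2n-1,2n+1]\times[2m-1,2m+1]\times\R$ the Zorich map has modulus comparable to $e^{x_3}$, which dominates $|x| = O(x_3^2)$. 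The point of the region $U_\alpha$ (the parabolic-type constraint $x_3 > (x_1^2+x_2^2)^{1/4}$) is precisely to guarantee this domination uniformly.

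The key geometric input is that any beam $P_{n,m} = [2n-1,2n+1]\times[2m-1,2m+1]\times\R$ is mapped by $Z$ onto a closed half-space with the origin removed; in particular, for a fixed such beam the image of a horizontal square slab $[2n-1,2n+1]\times[2m-1,2m+1]\times\{x_3\}$ under $Z$ is a topological sphere (the boundary of the pyramid scaled by $e^{x_3}$) that separates $\mathbf 0$ from $\infty$ — call this sphere $\Sigma_{x_3}$, and note $\mathrm{dist}(\mathbf 0,\Sigma_{x_3})$ and $\mathrm{diam}(\Sigma_{x_3})$ are both comparable to $e^{x_3}$. Now I would choose $\beta>0$ so that a ball $B(x,\beta)\subset U_\alpha$, with $x$ having its first two coordinates within the beam (which, after possibly enlarging $\alpha$ so that $x_3$ is large, we can always arrange to be the case for \emph{some} integer translate, using $G$-equivariance of $Z$ — but since $F$ is only $Z+\mathrm{Id}$, not $G$-equivariant, I instead keep the beam fixed and argue directly), the image $F(B(x,\beta))$ contains a full such slab image. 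Concretely: given $x\in U_\alpha$ with $B(x,\beta)\subset U_\alpha$, the ball $B(x,\beta)$ contains the square slab $S = \{y : (y_1,y_2)\in Q, y_3 = x_3\}$ where $Q$ is the $2\times 2$ square of the beam containing $(x_1,x_2)$, provided $\beta$ exceeds the diameter $2\sqrt2$ of that square times a fixed constant (the slab fits inside the ball once $\beta \geq 2\sqrt 2$). Then $Z(S) = \Sigma_{x_3}$, a sphere separating $\mathbf 0$ from $\infty$ with both $\mathrm{dist}(\mathbf 0,\cdot)$ and $\mathrm{diam}$ of order $e^{x_3}$, and $F(S) = Z(S) + \{(y_1,y_2,x_3) : (y_1,y_2)\in Q\}$ — wait, more carefully, $F(y) = Z(y) + y$, so $F(S)$ is a perturbation of $\Sigma_{x_3}$ by a set of diameter $O(|x|) = O(x_3^2) = o(e^{x_3})$.

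The remaining step is a standard perturbation/separation argument: since $F(S)$ is a continuous image of the sphere $S$ that is within Hausdorff distance $O(x_3^2)$ of the sphere $\Sigma_{x_3}$, and $\Sigma_{x_3}$ separates $\mathbf 0$ from $\infty$ with separation margin of order $e^{x_3} \gg x_3^2$ on both sides, the image $F(S)$ is a continuum which still separates $\mathbf 0$ from $\infty$ — one checks that $\mathbf 0$ lies in the bounded complementary component and that the unbounded complementary component is nonempty, using that $F(S)$ stays in an annulus $c_1 e^{x_3} \leq |z| \leq c_2 e^{x_3}$ for large $x_3$ and that $Z|_S$ already wraps around $\mathbf 0$ in the required topological sense (degree/winding argument, or simply: $\Sigma_{x_3}$ is the boundary of a topological ball containing $\mathbf 0$, $F(S)$ is homotopic to it in $\R^3\setminus\{\mathbf 0\}$ through the straight-line homotopy $t\mapsto Z(y)+ty$, which misses $\mathbf 0$ since $|Z(y)| \gtrsim e^{x_3} > |x| \geq |ty|$). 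Since $F(S)\subset F(B(x,\beta))$, this produces the desired continuum. The main obstacle is making the ``$Z$ wraps around $\mathbf 0$'' statement precise and quantitative enough — i.e. getting a uniform lower bound $|Z(y)|\geq c e^{x_3}$ on the slab $S$ and a uniform ``separation thickness'' for $\Sigma_{x_3}$ on both sides — so that the $O(x_3^2)$ translation perturbation provably cannot destroy separation; this is where enlarging $\alpha$ (to push $e^{x_3}/x_3^2 \to \infty$) and a careful choice of the beam relative to $x$ (to keep $Z|_S$ bounded away from $\mathbf 0$, using that $Z$ on a full $2\times 2$ slab omits a neighbourhood of $\mathbf 0$ of size $\sim e^{x_3}$) both come in.
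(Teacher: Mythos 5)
Your proposal founders on its key geometric claim: the image under $Z$ of a \emph{single} square slab $[2n-1,2n+1]\times[2m-1,2m+1]\times\{s\}$ is not a topological sphere. By construction, $h$ maps $[-1,1]^2$ onto the graph of $(x_1,x_2)\mapsto 1-M(x_1,x_2)$, i.e.\ onto the four \emph{upper} faces of the pyramid only, so $Z$ sends one square slice to a scaled copy of that tent --- a topological disk whose complement in $\R^3$ is connected. The separating sphere in the paper's remark comes from the image of the entire plane $\{x_3=s\}$ (equivalently, from at least two adjacent squares, one mapping to the upper pyramid surface and its neighbour to the reflected lower surface, glued along their common base square by the $G$-periodicity of $Z$). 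So the continuum $F(S)$ you produce from one slab cannot separate ${\bf 0}$ from $\infty$, and no perturbation estimate can fix that.

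Even after repairing this by taking two adjacent squares, the second half of your argument does not go through, because ``within Hausdorff distance $O(x_3^2)$ of a sphere of radius $\sim e^{x_3}$'' does not imply separation: a gap of any size in a surface destroys separation regardless of how small it is relative to the diameter. And a gap is exactly what appears. For $Z$, the images of two adjacent squares close up into a sphere only because $Z$ is $G$-periodic, so the two outer boundary edges land on the same base square in $\{x_3=0\}$. For $F=Z+\mathrm{Id}$ this identification fails: the non-common vertices of adjacent squares differ by $\tau(x)=x+(4,0,0)$, so the bases of the two image pyramids in $\{x_3=s\}$ are offset by $(4,0,0)$ and do not match up, leaving an actual hole. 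Your straight-line homotopy $t\mapsto Z(y)+ty$ shows $F$ and $Z$ are homotopic in $\R^3\setminus\{{\bf 0}\}$ as maps of the rectangle, but separation is not a homotopy invariant of the image of a disk, and $F$, unlike $Z$, does not descend to a map of $S^2$. This is precisely the difficulty the paper's proof is built to overcome: it takes \emph{three} consecutive squares, observes that the outer two image pyramids $P_{m-1}$ and $P_{m+1}$ are exact translates of one another by $(4,0,0)$, and verifies by an explicit cross-sectional picture that the union $P_{m-1}\cup P_m\cup P_{m+1}$ plugs the gaps and separates ${\bf 0}$ from $\infty$ inside the annulus $\{C_1e^s/2<|x|<2C_2e^s\}$. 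Your estimates $|x|\le 2x_3^2=o(e^{x_3})$ on $U_\alpha$ and the choice of $\beta$ large enough to swallow the needed squares are fine and match the paper; what is missing is the combinatorial/topological step that restores a closed separating surface despite the loss of $G$-equivariance.
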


Here, a bounded continuum $Y$ separates ${\bf 0}$ from $\infty$ if ${\bf 0}$ is in $\R^n \setminus Y$ and not in the unbounded component of $\R^n \setminus Y$. We remark that this lemma is clearly true for $Z$ itself, since $Z$ maps $\{x_3 = s \}$ onto a topological sphere which separates ${\bf 0}$ from $\infty$, but more work has to be done for $F$.

\begin{proof}
Suppose that $x \in U_{\alpha}$. 
Writing $x = (x_1,x_2,x_3)$, via the triangle inequality we have
\begin{equation}
\label{eq:a} 
|Z(x)| - |x| \leq |F(x)| \leq |Z(x)| + |x|,
\end{equation}
and since $x_3 > (x_1^2 + x_2^2)^{1/4}$ it follows that $|x| < (x_3^4 + x_3^2)^{1/2}<2x_3^2$ if $\alpha$ is sufficiently large.
Hence given any $\epsilon >0$, we can choose $\alpha$ large enough that if $x_3 > \alpha$ then $e^{-x_3}|x| < \epsilon$. Moreover, by the construction of $Z$, there exist positive constants $C_1 < C_2$ such that
\[ C_1e^{x_3} \leq |Z(x)| \leq C_2 e^{x_3}\]
for all $x\in\R^n$. It then follows from \eqref{eq:a} that for $x\in U_{\alpha}$ we have
\begin{equation}
\label{eq:b} 
\frac{C_1 e^{x_3}}{2} \leq |F(x)| \leq 2C_2 e^{x_3}.
\end{equation}

For $m,n\in \Z$, let $\Omega_{m,n}$ be the square $[2m-1,2m+1] \times [2n-1,2n+1]$. Since $Z$ is defined via a PL map on each slice $\Omega_{m,n} \times \{s \}$ for (fixed) $m,n \in \Z$ and $s\in \R$ and $F(x) = Z(x) + x$, then $F$ is also defined via a PL map on each $\Omega_{m,n} \times \{s \}$. 
Since $Z$ maps the edges of the square $\Omega_{m,n} \times \{ s \}$ into the plane $\{x_3 = 0\}$, it follows that $F$ maps these edges into the plane $\{ x_3 = s\}$. 

Now suppose $m,n \in \Z$ and $s\in \R$ are such that three neighbouring squares have the property that
\[ \bigcup_{i=m-1}^{m+1} \Omega_{i,n} \times \{ s \} \subset U_{\alpha}.\]
Denote by $P_{m-1},P_m$ and $P_{m+1}$ the respective images of these squares under $F$, and note that each of these is a pyramid. 
Write $w_1,w_2,w_3,w_4$ for the vertices of $\Omega_{m,n} \times \{ s\}$ starting at the top left and moving in an anticlockwise direction. Then $w_1,w_2$ are vertices shared with $\Omega_{m-1,n} \times \{ s\}$ and $w_3,w_4$ are vertices shared with $\Omega_{m+1,n} \times \{ s \}$.

Now, the vertices of $\Omega_{m,n} \times \{s \}$ and $\Omega_{m-1,n} \times \{ s\}$ that are not common to both are related by a translation $\tau (x) = x + (4,0,0)$ in the group $G$ with respect to which $Z$ is strongly automorphic. Since
\[ F(\tau(x)) = Z(\tau (x)) + \tau (x) = Z(x) + x + (4,0,0) = F(x) + (4,0,0),\]
it follows that the vertices of $P_m,P_{m-1}$ in $\{x_3 = s \}$ which are not common must differ by the vector $(4,0,0)$. The same property is true for $P_m$ and $P_{m+1}$. We conclude that $P_m$ and $P_{m-1}$ are pyramids whose bases are contained in the plane $\{x_3 = s\}$ and don't quite match up, see Figure \ref{fig:2}

\begin{figure}[h]
\begin{center}
\includegraphics{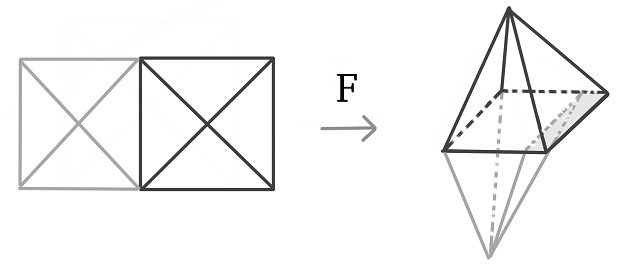}
\end{center}
\caption{The darker shade indicates $\Omega_{m,n} \times \{ s \}$ and its image under $F$, and the ligher shade indicates $\Omega_{m-1,n} \times \{ s \}$ and its image under $F$. The gap between the pyramids in the plane $\{x_3 = s\}$ is shaded.}
\label{fig:2}
\end{figure}

By the same argument above, since $\Omega_{m+1,n}\times \{ s \} = \tau ( \Omega_{m-1,n} \times \{s \} )$, it follows that $P_{m+1}$ must be a translate of $P_{m-1}$ by $(4,0,0)$. Consequently, this and \eqref{eq:b} imply that the union 
\[ P = \bigcup_{i=m-1}^{m+1} P_i\]
is contained in the ring $\{x : C_1e^s/2 < |x| < 2C_2e^s \}$ and separates $0$ from $\infty$. To see why this is true, consider the slice of $P$ intersected with the plane $\{x_2  = w \}$, see Figure \ref{fig:3} for a typical instance of this, and recall $P_{m-1}$ and $P_{m+1}$ are translates.

\begin{figure}[h]
\begin{center}
\includegraphics{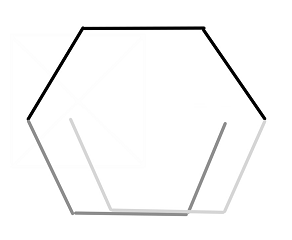}
\end{center}
\caption{A slice of $P$ intersected with $\{x_2 = w \}$. The darker shade indicates $P_m$, the lighter shade $P_{m-1}$ and the dotted line indicates $P_{m+1}$.}
\label{fig:3}
\end{figure}

Finally, we choose $\beta$ large enough so that $B(x,\beta)$ is guaranteed to contain three consecutive squares as above. Then $F(B(x,\beta))$ must contain a continuum separating $0$ from $\infty$.
\end{proof}

We can now identify the Julia limiting directions of $F$.

\begin{lemma}
\label{lem:Fjld}
With $F:\R^3 \to \R^3$ as above, the set of Julia limiting directions $L(F)$ is the hemisphere $S^2_+ := \{ (x_1,x_2,x_3) \in S^2 : x_3 \geq 0 \}$.
\end{lemma}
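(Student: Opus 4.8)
The plan is to prove the two inclusions $L(F)\subseteq S^2_+$ and $S^2_+\subseteq L(F)$ separately. For the first I would use that $F$ is the identity on the open lower half-space $H=\{x_3<0\}$: if $x\in H$ and $U$ is a ball about $x$ with $U\subseteq H$, then $F^k(U)=U$ for every $k\ge1$, so $\bigcup_{k\ge1}F^k(U)=U$ omits $\R^3\setminus U$, which is not of capacity zero; hence $x\notin J(F)$ and $H\subseteq QF(F)$. Consequently, if some $\xi\in S^2$ with $\xi_3<0$ were a Julia limiting direction, a witnessing sequence $x_k\in J(F)$ with $|x_k|\to\infty$ and $\sigma(x_k)\to\xi$ would satisfy $(x_k)_3=|x_k|\,\sigma(x_k)_3<0$ for all large $k$, forcing $x_k\in H\subseteq QF(F)$ -- a contradiction. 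Hence $L(F)\subseteq S^2_+$.

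For the reverse inclusion, since $L(F)$ is closed (Proposition \ref{prop:1}) it suffices to show that every $\xi\in S^2$ with $\xi_3>0$ lies in $L(F)$, the equator then being absorbed on taking closures. Fix such a $\xi$ and, for contradiction, assume $\xi\notin L(F)$. Exactly as in the proof of Theorem \ref{thm:1} there are $\eta>0$ and $x_0\in\R^3$ with $\Omega(x_0,\xi,\eta)\cap J(F)=\emptyset$, so the connected sector $\Omega(x_0,\xi,\eta)$ lies in a single component $U$ of $QF(F)$. If $T(U)=\R^3$ then $U$ is an unbounded, hollow component of $QF(F)$, and by \cite[Theorem 1.4]{NS17} -- invoked as in the proof of Theorem \ref{lem:sector} -- $U$ is completely invariant, which forces $J(F)\subseteq\R^3\setminus U$ to be bounded, contradicting the unboundedness of $J(F)$ established in the proof of Proposition \ref{prop:1}. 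So $T(U)$ is a proper subset of $\R^3$ and Theorem \ref{lem:sector} applies: fixing $0<\eta'<\eta$, there is $d>0$ with $|F(x)|=O(|x|^{d})$ for $x\in\Omega(x_0,\xi,\eta')$.

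To contradict this polynomial bound I would push balls centred far out along the ray of direction $\xi$ through Lemma \ref{lem:covering}. Because $\xi_3>0$, an elementary estimate gives that for all sufficiently large $r$ the ball $B(r\xi,\beta)$ lies both in $U_\alpha$ and in $\Omega(x_0,\xi,\eta')$: the direction of each point of $B(r\xi,\beta)$ relative to $x_0$ tends to $\xi$, while the height $r\xi_3$ eventually exceeds $\max(\alpha,(y_1^2+y_2^2)^{1/4})$ on that ball -- here $\xi_3>0$ is essential, since a cone about the ray has transverse size $\sim r$ whereas $U_\alpha$ at height $h\sim r\xi_3$ admits transverse size $\sim h^{2}$. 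By Lemma \ref{lem:covering}, and inspecting its proof, $F(B(r\xi,\beta))$ contains a continuum lying in the spherical ring $\{\,C_1e^{s}/2<|y|<2C_2e^{s}\,\}$ for some square-height $s\ge r\xi_3-\beta$, so there is $y\in B(r\xi,\beta)\subseteq\Omega(x_0,\xi,\eta')$ with $|F(y)|\ge C_1e^{r\xi_3-\beta}/2$ while $|y|\le r+\beta$. Letting $r\to\infty$, $|F(y)|$ grows exponentially in $r$ but $|y|^{d}$ only polynomially, contradicting $|F(x)|=O(|x|^{d})$ on $\Omega(x_0,\xi,\eta')$. Hence $\xi\in L(F)$, and with the first inclusion this yields $L(F)=S^2_+$.

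I expect nearly all of the difficulty to be pre-packaged in Lemma \ref{lem:covering} (producing, from a ball in $U_\alpha$, a continuum that winds around the origin at the exponentially large scale $e^{x_3}$) and in Theorem \ref{lem:sector}. Within the argument above, the step demanding the most care is the geometric claim that truncated cones about the ray of direction $\xi$ fit inside the paraboloid-type region $U_\alpha$ -- valid precisely when $\xi_3>0$, which is exactly why the answer is the closed hemisphere $S^2_+$ -- together with the bookkeeping needed to exclude the alternative $T(U)=\R^3$ via \cite{NS17}.
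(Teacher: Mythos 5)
Your first inclusion ($L(F)\subseteq S^2_+$) is correct and is essentially the paper's own argument: $F$ is the identity on $H$, so $H\subseteq QF(F)$, and any witnessing sequence for a direction with $\xi_3<0$ would eventually lie in $H$. For the reverse inclusion you take a genuinely different route. The paper locates Julia points directly: since the lower order of $F$ is positive, $J(F)=\partial A(F)$ by \cite[Theorem 1.2]{BFN}; components of $A(F)$ are unbounded, so any continuum far from the origin that separates ${\bf 0}$ from $\infty$ must meet $J(F)$; Lemma \ref{lem:covering} produces such a continuum in $F(B(x,\beta))$ for every ball $B(x,\beta)\subset U_\alpha$, and complete invariance of $J(F)$ pulls a Julia point back into $B(x,\beta)$. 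This yields the equatorial directions directly, because $U_\alpha$ is not contained in any proper cone about the north pole. You instead argue by contradiction through the sectorial growth bound of Theorem \ref{lem:sector} combined with the exponential lower bound \eqref{eq:b} on $|F|$ in $U_\alpha$ (for this you do not actually need the separation property of Lemma \ref{lem:covering} at all, only the modulus estimate at the single point $r\xi$), and you recover the equator by closedness of $L(F)$. Your geometric claim that truncated cones about a ray with $\xi_3>0$ eventually sit inside $U_\alpha$ is correct, and this alternative has the merit of avoiding the fast escaping set machinery entirely.

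The one genuine gap is your exclusion of the case $T(U)=\R^3$. You assert that complete invariance of $U$ (from \cite[Theorem 1.4]{NS17}) ``forces $J(F)\subseteq\R^3\setminus U$ to be bounded.'' That does not follow from what you invoke: $T(U)=\R^3$ says only that every complementary component of $U$ is bounded, and a completely invariant component could a priori have infinitely many bounded complementary components accumulating at infinity, so $\R^3\setminus U$, and hence $J(F)$, need not be bounded. Unless \cite[Theorem 1.4]{NS17} itself supplies more than complete invariance (you do not quote its full statement, and neither does the paper), this step needs a different justification. For this specific map the cleanest repair is the paper's own mechanism: show that $J(F)$ meets every ball $B(x,\beta)\subset U_\alpha$, which directly contradicts $\Omega(x_0,\xi,\eta)\cap J(F)=\emptyset$ for $\xi_3>0$ and renders the detour through Theorem \ref{lem:sector}, and with it the whole $T(U)$ issue, unnecessary. (It is only fair to note that the paper's proof of Theorem \ref{thm:1} also applies Theorem \ref{lem:sector} without verifying the $T(U)$ hypothesis, so you are at least attempting to address a point the paper glosses over.)
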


\begin{proof}
Since the lower order of $F$ is positive, it follows by \cite[Theorem 1.2]{BFN} that the Julia set $J(F)$ agrees with the boundary of the fast escaping set $\partial A(F)$. Here, we do not need the precise definition of the fast escaping set, only that it is non-empty for mappings of transcendental-type and that every component is unbounded, see \cite[Theorem 1.2]{BDF}.

Let $X$ be a component of $A(F)$. Then there exists $R>0$ such that $X$ meets $S(0,r)$ for every $r>R$. Since the half-space $H$ on which $F$ is the identity is clearly not in $A(F)$, it follows that $\partial X$, and hence $J(F)$, meets $S(0,r)$ for every $r>R$. More generally, if $Y$ is any continuum contained in $\R^3 \setminus B(0,R)$ that separates $0$ from $\infty$, then $Y$ is guaranteed to meet $\partial X$ and hence $J(F)$.

Choose $\alpha$ large enough that if $x\in U_{\alpha}$, then $F(B(x,\beta)) \subset \R^3\setminus B(0,R)$. By Lemma \ref{lem:covering} and the previous observation, it follows that $F(B(x,\beta))$ must meet $J(F)$. By complete invariance of the Julia set, it follows that $J(F)$ meets $B(x,\beta)$ too.

Now, if $\theta \in S^2_+$, choose $x_n$ with $B(x_n,\beta) \subset U_{\alpha}$ such that $|x_n| \to \infty$ and $\sigma(x_n) \to \theta$ as $n\to \infty$. It's important to observe that if $\xi =(a,b,0) \in S^2$, then we can find such a sequence since $U_{\alpha}$ is not contained in any sector of the form $\Omega(0,(0,0,1), \eta)$ for $\eta < \pi /2$.

Finally,  find $y_n \in J(F) \cap B(x_n,\beta)$. By construction, we have $|y_n|\to \infty$ and $\sigma(y_n) \to \theta$ and hence $\theta \in L(F)$. It follows that $S^2_+$ is contained in $L(F)$. To see that $L(F)$ equals $S^2_+$, suppose that $\xi \in L(F) \cap (S^2 \setminus S^2_+)$. Then there must exist a sequence $(y_n)_{n=1}^{\infty}$ in $J(F)$ with $\sigma(y_n) \to \xi$, but also can be assumed to be in the half-plane $H = \{ (x_1,x_2,x_3) : x_3<0 \}$ on which $F$ is the identity. This yields a contradiction, and the lemma follows.
\end{proof}

\subsection{Double inner chordarc curves and sectorial domains}

We now prove the following lemma that shows certain sectorial domains are ambient quasiballs, recalling that an ambient quasiball is the image of a ball or half-space under an ambient quasiconformal map.

\begin{lemma}
\label{lem:qbsect}
Let $J\subset S^2$ be a double inner chordarc domain, and let $D$ be the closure of one of the components of $S^2 \setminus J$. Writing $x\in \R^3 \setminus \{  0 \}$ in spherical coordinates $x=(r,\theta) \in \R^+ \times S^2$, let $S$ be the sectorial domain
\[ S = \{ (r,\theta) : r>0, \theta \in D \} . \]
Then there is a quasiconformal map $f_S:\R^3 \to \R^3$ such that $f_S(S) =H$.
\end{lemma}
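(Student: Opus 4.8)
The plan is to reduce the three-dimensional statement to a two-dimensional one about the cross-section $D$ via a ``coning'' argument, and then invoke the cited results of Gehring and V\"ais\"al\"a on which two-dimensional domains cone to quasiballs. First I would observe that the sectorial domain $S$ is, up to the obvious identification of $\R^3\setminus\{0\}$ with $\R^+\times S^2$, the cone over $D$ with vertex at the origin, and that the half-space $H$ is the cone over an (open) hemisphere of $S^2$. So the lemma is really the assertion: the cone over $D$ is ambient quasiconformally equivalent to the cone over a hemisphere. The natural strategy is to produce the map $f_S$ as a ``radial extension'' of a boundary map: if $g:S^2\to S^2$ is a quasiconformal self-map of the sphere with $g(D)$ equal to a hemisphere, then writing points in spherical coordinates one sets $f_S(r,\theta)=(r,g(\theta))$ (extended by $f_S(0)=0$), and one checks this is quasiconformal on $\R^3$. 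The dilatation bound is a routine computation: the radial direction is preserved isometrically (in the conformally natural sense) and the spherical directions are distorted by exactly the dilatation of $g$, so $K(f_S)$ is controlled by $K(g)$ together with a dimensional constant; there is no bad behaviour at $0$ or $\infty$ because the map is ``scale-invariant'' in $r$.

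Thus everything comes down to producing the spherical quasiconformal map $g$ sending $D$ to a hemisphere, and this is exactly where the double inner chordarc hypothesis enters. The boundary Jordan curve $J=\partial D$ is a double inner chordarc curve in $S^2$, and the content of the cited theorems of Gehring \cite{Gehring} and V\"ais\"al\"a \cite{Vaisala} (after translating from the Euclidean plane to $S^2$ via the chordal-isometry remark already made in the excerpt, and normalizing a hemisphere to a half-plane by stereographic projection) is precisely that such a curve is the image of a straight line under a quasiconformal self-map of $S^2$; equivalently, each complementary domain $D_1,D_2$ is an ambient quasidisk. Concretely: inner chordarc domains are quasiconformally collared / are images of half-planes under global quasiconformal maps in the plane, and the two-sided (double) condition guarantees one can do this for the two sides simultaneously with a single global map $g$. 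I would state this reduction as the key step and cite the precise statements from \cite{Gehring,Vaisala} that give it.

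The remaining, and genuinely delicate, point is the verification that the radial extension $f_S$ is quasiconformal \emph{as a map of all of $\R^3$} rather than merely of $\R^3\setminus\{0\}$. One must check that $f_S$ lies in $W^{1,3}_{\mathrm{loc}}$ near the origin (and behaves well near $\infty$): since $g$ is only quasiconformal, not smooth, the extension $f_S$ is defined a.e. by the formula above, and one needs that the apparent singularity at $0$ is removable. This follows because $f_S$ is a homeomorphism of $\R^3$ that is quasiconformal on $\R^3\setminus\{0\}$, and a single point has zero conformal capacity, so by the standard removability of sets of capacity zero for quasiconformal maps (e.g.\ \cite[Theorem VII.1.3]{Rickman} or the analytic definition directly, noting $\{0\}$ is removable for $W^{1,3}_{\mathrm{loc}}$), $f_S$ is quasiconformal on all of $\R^3$; the point at $\infty$ is handled identically after a chordal reflection, or simply by noting the scale-invariance in $r$. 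I expect this capacity/removability step, together with getting the correspondence with the Gehring--V\"ais\"al\"a statements exactly right (the passage between half-plane in $\R^2$, hemisphere in $S^2$, and half-space in $\R^3$), to be the main obstacle; the dilatation estimate for the cone map itself is essentially bookkeeping.
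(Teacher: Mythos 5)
Your reduction to a two--dimensional statement about $D$ does not work, for two separate reasons. First, the ``routine computation'' for the cone map is wrong: writing $r=e^t$, the Euclidean metric on $\R^3\setminus\{0\}$ is conformal to the product metric on $\R\times S^2$, so your map $f_S(r,\theta)=(r,g(\theta))$ is the product $\mathrm{id}\times g$, whose derivative has singular values $1,s_1,s_2$ with $s_1\geq s_2$ the singular values of $Dg$. Its dilatation is governed by $\max\{1,s_1\}/\min\{1,s_2\}$, not by $s_1/s_2=K(g)$; since a quasiconformal self-map of $S^2$ need not have essentially bounded derivative (think of a radial stretch at a point), the cone over $g$ is quasiconformal only when $g$ is essentially bi-Lipschitz. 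Second, and more fundamentally, the statement you attribute to Gehring and V\"ais\"al\"a --- that a double inner chordarc curve is the image of a great circle under a quasiconformal (let alone bi-Lipschitz) self-map of $S^2$ --- is not what those papers prove, and it is false: the inner chordarc condition is phrased in the \emph{internal} metric of each complementary domain, and a rectifiable Jordan curve that spirals infinitely often into a point can satisfy it on both sides while violating the Ahlfors three-point condition, so $D$ need not be a quasidisk at all. V\"ais\"al\"a's theorem is a genuinely three-dimensional statement (the cylinder $D\times\R$ is quasiconformally a ball iff $D$ is inner chordarc), and Gehring's is the three-dimensional two-sided extension theorem; neither yields your planar $g$.

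The paper's proof is structured precisely to avoid both problems. It uses a radial extension only of a genuinely bi-Lipschitz sphere map (to shrink $S$ into a half-space); it then applies a branch of the inverse Zorich map, which carries the sector bi-Lipschitzly on slices onto a cylinder whose cross-section still satisfies the inner chordarc condition; V\"ais\"al\"a's cylinder theorem then gives a quasiconformal map of that cylinder, hence of $S$, onto $\B^3$; the same argument applied to the complementary sector $S^*=\R^3\setminus\overline S$ (this is where the \emph{double} hypothesis enters) produces a quasiconformal map of $S^*$ onto $\B^3$; and Gehring's extension theorem upgrades the two one-sided maps to an ambient quasiconformal map of $\R^3$. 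Your removability-at-the-origin discussion is fine but addresses a side issue; the missing ideas are the passage from sector to cylinder and the use of the two-sided extension theorem in place of a nonexistent planar quasicircle statement.
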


\begin{proof}
We fix a branch $Z^{-1}$ of the inverse of the Zorich map which maps $\R^3 \setminus \{ 0 \}$ onto a beam $B\subset [-1,3]\times [-1,1]  \times \R$ in $\R^3$ which includes some, but not all, of the boundary of $[-1,3] \times [-1,1] \times  \R$, recall Section \ref{sect:z}.

Next, find $\theta_0 \in S^2$ and $0<\delta < \pi$ so that $S \subset \Omega(0,\theta_0,\delta)$. By applying a rotation, we can assume without loss of generality that $\theta_0 = e_1$. 
Let $h:S^2 \to S^2$ be a lift of the planar linear map $z\mapsto \epsilon z$ for $\epsilon >0$ such that $h$ fixes $e_1$ and is the attracting fixed point. Then $h$ is bi-Lipschitz in the chordal metric and we can extend $h$ radially to obtain a quasiconformal map
$f_1 :\R^3 \to \R^3$ such that $f_1(S) \subset \{ x=(x_1,x_2,x_3) : x_3 >0 \}$. Then since $f_1$ is also bi-Lipschitz and $Z^{-1}$ restricted to any slice is bi-Lipschitz and since the internal chord-arc condition is preserved under bi-Lipschitz mappings, it follows that $Z^{-1}(f_1(S))$ is a cylinder contained in $[-1,1]^2 \times\R$ whose cross-section is a domain with the internal chordarc condition. By \cite[Theorem 5.2]{Vaisala} it follows that there is a quasiconformal map $f_2:Z^{-1}(f(S)) \to \B^3$.

We conclude that there is a quasiconformal map $g_S:S \to \B^3$. Applying the same argument to the sectorial domain $S^* := \R^3 \setminus \overline{S}$, we conclude there is also a quasiconformal map $g_{S^*}:S^* \to \B^3$. Hence, by \cite{Gehring}, it follows that $S$ is an ambient quasiball, and the conclusion follows.
\end{proof}

To illustrate the main idea behind the proof of Theorem \ref{thm:2}, we prove the following special case.

\begin{lemma}
\label{lem:m=1}
Let $J\subset S^2$ be a double inner chordarc domain, and let $D$ be the closure of one of the components of $S^2 \setminus J$. Let $S$ be the sector $\R^+ \times D$ in spherical coordinates. Then there exists a quasiregular mapping $F_D$ of transcendental-type of finite order such that $L(F_D) = D$. Moreover, $F_D$ is the identity in $\R^3 \setminus S$.
\end{lemma}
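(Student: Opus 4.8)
The plan is to transport the Nicks--Sixsmith map $F:\R^3\to\R^3$ through an ambient quasiconformal conjugacy so that the half-space $H$ on which $F$ is the identity becomes the complement of the sector $S$, and the half-space $H'$ where $F$ behaves like $Z+\mathrm{Id}$ becomes (a subsector of) $S$. First I would invoke Lemma \ref{lem:qbsect} to obtain a quasiconformal map $g=f_S:\R^3\to\R^3$ with $g(S)=\widehat H$, where $\widehat H = \{x_3>0\}$ is a fixed open half-space; equivalently $g(\R^3\setminus\overline S)=\{x_3<0\}=H$, matching the half-space on which $F$ is the identity. Then set $F_D := g^{-1}\circ F\circ g$. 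This is quasiregular of transcendental-type, being a composition of a transcendental-type quasiregular map with a quasiconformal map and its inverse (one must check transcendental-type is preserved: $\lim_{x\to\infty}F_D(x)=\infty$ would force $\lim_{y\to\infty}F(y)=\infty$ via the homeomorphism $g$, which is false). Since $F$ is the identity on $H$ and $g$ maps $\R^3\setminus\overline S$ onto $H$, we get $F_D = g^{-1}\circ g = \mathrm{Id}$ on $\R^3\setminus\overline S$; extending by continuity gives that $F_D$ is the identity on $\R^3\setminus S$, as required.

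Next I would pin down the Julia set under conjugacy. Since $J(F_D)=g^{-1}(J(F))$ by conformal invariance of the conformal capacity (quasiconformal maps distort capacity boundedly, so a set of zero capacity is preserved; this gives $J(g^{-1}\circ F\circ g)=g^{-1}(J(F))$), the Julia limiting directions of $F_D$ are determined by how $g^{-1}$ moves sequences tending to infinity along $J(F)$. By Lemma \ref{lem:Fjld}, $J(F)$ accumulates at infinity exactly in the directions of the closed hemisphere $S^2_+$; in fact the relevant escaping points lie in a region like $U_\alpha$ near the ``axis'' $e_3$ (together with boundary directions), while $J(F)$ stays out of the half-space $H$ where $F$ is the identity. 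So I need: (a) $g^{-1}$ sends sequences in $J(F)$ with $|x_n|\to\infty$ and $\sigma(x_n)\to\xi\in S^2_+$ to sequences whose spherical parts accumulate in $D$; and (b) $g^{-1}$ cannot produce a limiting direction outside $D$, because $J(F_D)\cap(\R^3\setminus S)=\emptyset$ and $\R^3\setminus S$ contains every direction in $S^2\setminus D$ in a neighbourhood-of-infinity sense. Part (b) is immediate from $F_D=\mathrm{Id}$ off $S$: any $y_n\in J(F_D)$ with $\sigma(y_n)\to\xi\notin D$ would eventually lie in $\R^3\setminus S$, a contradiction, so $L(F_D)\subset D$.

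For the reverse inclusion $D\subset L(F_D)$, the point is that $g$ sends the sector $S$ to the half-space $\widehat H$ in a way that is, in the relevant sense, asymptotically like a radial extension: tracing through the construction of $f_S$ in Lemma \ref{lem:qbsect} (a radial extension $f_1$ of a chordal bi-Lipschitz lift, then $Z^{-1}$, then V\"ais\"al\"a's map $f_2$), the image $g(S)$ is the half-space $\widehat H$, and crucially $g$ maps the full subsector $\Omega(0,e_1,\delta')\cap S$ over a subregion of $\widehat H$ that contains a region of the type $U_\alpha$ used in Lemma \ref{lem:Fjld}. Concretely, given $\theta\in D$, pick $x_n\in S$ with $\sigma(x_n)\to\theta$ and $|x_n|\to\infty$; then $g(x_n)\in\widehat H$ with $|g(x_n)|\to\infty$, and I would argue $g(x_n)$ eventually enters $U_\alpha$ (or, for boundary directions $\theta\in\partial D$, can be chosen to do so, using that $U_\alpha$ is not contained in any proper subsector of $\widehat H$, exactly as in the proof of Lemma \ref{lem:Fjld}). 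By Lemma \ref{lem:covering} there are points of $J(F)$ within bounded distance of $g(x_n)$; pulling these back by $g^{-1}$ gives points $y_n\in J(F_D)$ with $|y_n|\to\infty$, and since $g^{-1}$ is a homeomorphism fixing the sector structure, $\sigma(y_n)\to\theta$. Hence $\theta\in L(F_D)$. The order statement follows since conjugation by a quasiconformal map of $\R^3$ changes $M(r,F_D)$ only by bounded multiplicative and $r$-argument distortions, so $\mu_{F_D}=\mu_F=2$ by \eqref{eq:order}.

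The main obstacle is controlling the behaviour of the conjugating map $g$ near infinity with enough precision to prove the reverse inclusion (and the order statement) — that is, checking that $g$ and $g^{-1}$ genuinely preserve radial limiting directions and move the ``active'' region $U_\alpha$ of $F$ into the interior of the conjugated sector. Because $g$ is only quasiconformal, it need not map rays to rays, so one must exploit the explicit layered structure of $f_S$ (radial extension composed with a Zorich branch inverse composed with a cylinder-straightening quasiconformal map), each layer of which either fixes the radial direction or acts on the Zorich beam in a controlled, slice-bi-Lipschitz way; assembling these estimates to show $\sigma(g(x_n))$ and $\sigma(g^{-1}(y_n))$ converge appropriately is where the real work lies.
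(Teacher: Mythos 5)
Your proposal follows essentially the same route as the paper: define $F_D = f_S^{-1}\circ F\circ f_S$ with $f_S$ the ambient quasiconformal map from Lemma \ref{lem:qbsect}, observe that quasiregularity, transcendental-type and finite order are preserved under the conjugacy, and read off $L(F_D)$ from Lemma \ref{lem:Fjld}. The step you single out as the main obstacle --- verifying that the quasiconformal conjugacy genuinely transports the limiting directions of $J(F)$ at infinity onto $D$, since a quasiconformal map need not send rays to rays --- is precisely the point the paper passes over with the single sentence ``By Lemma \ref{lem:Fjld} it follows that $L(F_D) = D$,'' so your plan is, if anything, more explicit than the published argument about what remains to be checked (and your orientation convention, with $f_S$ sending $\R^3\setminus S$ onto the half-space where $F$ is the identity, is the one actually needed for the final claim).
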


\begin{proof}
With $F$ the construction of Nicks and Sixsmith and $f_S$ the quasiconformal map from Lemma \ref{lem:qbsect}, define 
\[ F_D = f_S^{-1} \circ F \circ f_S.\]
Clearly $F_D$ is a quasiregular mapping of transcendental-type, and the conjugate of a map of finite order by a quasiconformal map is also of finite order (although the order may change).
The Julia set of $F_D$ is $f_S^{-1}(J(F))$. By Lemma \ref{lem:Fjld} it follows that $L(F_D) = D$.
\end{proof}

\begin{proof}[Proof of Theorem \ref{thm:2}]
Let $D_1,\ldots, D_m$ and $E = \bigcup_{j=1}^m D_j$ be as in the hypotheses. Construct the quasiregular maps $F_{D_1} , \ldots, F_{D_m}$ via Lemma \ref{lem:m=1}. The required map is then
\[ f_E(x) = \left \{ \begin{array}{cc} F_{D_j}(x) & \sigma(x)\in D_j, \: j=1,\ldots, m \\ x & \text{otherwise} \end{array} \right .  .\]
 Since the sectors $S_1,\ldots, S_m$ are pairwise disjoint, $f_E$ is a quasiregular map of transcendental-type and is of finite order.

Since $f_E$ is the identity outside $\bigcup_{j=1}^m S_j$, Lemma \ref{lem:qbsect} shows that $L(f_E) = E$. It is worth pointing out that $J(f_E)$ is not necessarily the union of $J(F_{D_1}) , \ldots, J(F_{D_m})$ since, for example, there could be elements of $S_1 \cap J(F_{D_1})$ which are mapped by $F_{D_1}$ into $J(F_{D_2})$. However, these extra points in the Julia set must remain in the union of the sectors, and so do not affect the result.
\end{proof}

\end{document}